\newtheorem{thm}{Theorem}[section]
\newtheorem{lem}[thm]{Lemma}
\newtheorem{prop}[thm]{Proposition}
\newtheorem{cor}[thm]{Corollary}
\newtheorem{defn}[thm]{Definition}
\newtheorem{remark}[thm]{Remark}
\newcommand{\R}{{\mathbb R}}
\newcommand{\Q}{{\mathbb Q}}
\newcommand{\C}{{\mathbb C}}
\newcommand{\End}{\textrm{End}}
\newcommand{\U}{\mathrm{U}}
\newcommand{\G}{\mathrm{G}}
\title[$\textrm{U}(n)$-structures and their induced minimal left ideals]{$\textrm{U}(n)$-structures  and their induced minimal left ideals}
\author[R. Su\'arez]{ Ricardo Su\'arez}
\address{Natural Science Division, Pepperdine University, Malibu, CA, USA}
\email{josericardo.suarez@pepperdine.edu}
\begin{document}

\normalsize

\begin{abstract} 

In previous work,
we associated  to $\textrm{SU(3)}$, $\mathrm{G}_2$, and $\textrm{Spin(7)}$-structures minimal left ideals for the Clifford algebras $\mathbb{R}_{0,6},\mathbb{R}_{0,7}$, and $\mathbb{R}_{0,8}$, respectively. In this paper, we continue to analyze the link between Berger's classification theorem and the structure theorem of minimal left ideals for Clifford algebras of signature $(p,q)$ by  identifying $\mathrm{U}(n)$-structures with minimal left ideals for Clifford algebras  of various  signatures via the induced  Kahler polynomial $P(\omega_{0})$ associated with the symplectic form $\omega_{0}$ that defines the $\mathrm{U}(n)$-structure as a stabilizer subgroup of $\mathrm{O}(n)$.

\end{abstract}
\maketitle
\section{Introduction}

In previous work (see \cite{Su}), we associated primitive idempotents that make up the minimal left ideals for the Clifford algebras $\R_{0,6}, \R_{0,7},$ and $\R_{0,8}$ with special 
geometric structures in dimensions $6,7,$ and $8$. In this paper, we continue our exploration of the geometric structures  in Marcel Berger's classification (see Theorem \ref{Berger}) by identifying $\textrm{U}(n)$-structures with  minimal left ideals of real Clifford algebras of signature $(p,q)$. 
We say (see \cite{Su}) that a minimal left ideal is \textit{induced} by a $\G$-structure if  the minimal left ideal can be expressed as the image of an algebraic  expression involving the differential forms that define that $\G$-structure via the isomorphic extension of the quantization map $q^{*}:\bigwedge^{\bullet}(\R^{p+q})^{*}\xrightarrow{\cong} \R_{p,q}$. Conversely, a $\G$-structure can be \textit{recovered} from a minimal left ideal if the differential form that makes up the $\G$-structure is the image of an algebraic expression involving the primitive idempotent that defines that minimal left ideal via the isomorphic extension of the symbol map $\sigma^{*}:\R_{p,q}\xrightarrow{\cong} \bigwedge^{\bullet}(\R^{p+q})^{*}$. 

In the current work, we identify geometric structures with Clifford algebras  in cases where the underlying vector spaces for both algebras are not of the same dimension, so new terminology is needed. We therefore introduce the concept of a minimal left ideal being \textit{induced by an embedded $\G$-structure} when the minimal left ideal of the Clifford algebra $\R_{p,q}$ is induced by a $\G$-structure from a space $\R^{n}$ where $n<p+q$. At the same time, we say that a $\G$-structure is \textit{recovered by projection} when the $\G$-structure is recovered from an element $\tilde{f}$ in a Clifford algebra $\R_{p,q}$ such that $p+q>n$, rather than an algebraic expression involving the primitive idempotent (see Definition \ref{definition: recovered by projection}).  

We define a specific  $\textrm{U}(n)$-structure by fixing an almost complex structure $J_0$ with $J_0(e_{j})=e_{j+n}$ and $J_0(e_{j+n})=-e_{j}$ for all $j=1,2,\ldots,n$, which, in conjunction with the flat Riemannian metric $g_{0}$ on $\R^{2m}$, specifies the Kahler form $\omega_{0}$. 

In Lemma \ref{Standard Kahler form}, we give a local description of this  Kahler form, where  the association between the three tensors is given by the formula $\omega_{0}=g_{0}(J\cdot,\cdot)=e^1\wedge e^{n+1}+\cdots+ e^{n}\wedge e^{2n}.$ With the aid of the  Kahler form, we define what we call the \textbf{rational Kahler polynomial}, which we denote  $P^{\mathbb{Q}}(\omega_{0})$ (see Definition \ref{Rational Kahler}). In Theorem \ref{Kahler}, with the  aid of the rational Kahler polynomial, we show that our $\textrm{U}(p)$-structures on $\R^{2p}$ induce the  minimal left ideals  $\R_{p,p} f_{\textrm{U}(p)}$ for Clifford algebras $\R_{p,p}$, while for Clifford algebras $\R_{p,p+1}$ and $\R_{p,p+2}$, the minimal left ideals are induced by an embedded $\textrm{U}(p)$-structure on $\R^{2p}$. For all of these cases, the  primitive idempotent $f_{\textrm{U}(p)}$  is the image of the rational Kahler polynomial in $\R^{2p}$ via the extension of the quantization map; that is, $q^{*}(P^{\mathbb{Q}}(\omega_{0}))=f_{\textrm{U}(p)}$. Since the minimal left ideals $\R_{p,p+1} f_{\textrm{U}(p)}$ are  of complex type, and the minimal left ideals $\R_{p,p+2} f_{\textrm{U}(p)}$ are of quaternionic type, we go on to define complex and quaternionic structures that we can associate with these induced minimal left ideals. We conclude Section \ref{U(n) structures and their induced minimal left ideals}  with examples of $\textrm{U}(3)$-structures and their induced minimal left ideals in the Clifford algebras $\R_{3,3}, \R_{3,4},$ and $\R_{3,5}$. 

In Corollary \ref{Kahler Corrollary}, we show that for the  Clifford algebra $\R_{p,p}$, we can recover a $\textrm{U}(p)$-structure on $\R^{2p}$ from a minimal left ideal $\R_{p,p}f$, and we can projectively recover  a $\textrm{U}(p)$-structure on $\R^{2p}$ from a minimal left ideal $\R_{p,p+1}f$ (resp.\  $\R_{p,p+2}f$) for the Clifford algebra 
$\R_{p,p+1}$ (resp.\ $\R_{p,p+2}$),  where the primitive idempotent $f$ for all three cases generates the rational Kahler polynomial; that is,  $\sigma^{*}(f)=P^{\mathbb{Q}}(\omega_{0})$. We conclude this manuscript with Proposition \ref{Recovery theorem}, which states that for all other signatures, there exists an element $\tilde{f}\in\R_{p,q}$ which is a factor of a minimal left ideal that recovers a $\textrm{U}(m)$-structure; moreover, $\tilde{f}$  generates the Kahler polynomials in a subspace $\R^{2m}\subset \R^{p+q} $  where $m= \textrm{min}\{p,q\}.$  The corresponding  minimal left ideal  $\R_{p,q} h$ is then given by the Clifford product  $h=f e$ for some element $e\in\R_{p,q}$ such that $\sigma^{*}(h)=P^{\mathbb{Q}}(\tilde{\omega_{0}})\wedge\sigma^{*}(e)$, where $\sigma^{*}(e)$ can be viewed as an exterior form on a vector subspace $\R^{l}\subset \R^{p+q}$, where $l=p+q-2m$.

\section{Identifying $\G$-structures with minimal left ideals for Clifford algebras}

In this section, we provide an overview of what it means to identify a special geometric structure with a minimal left ideal of a Clifford algebra (see \cite{Su} for a more in-depth introduction).
We begin by defining a $\G$-structure for a general oriented Riemannian manifold $(M,g)$ of dimension $n$.
 
\begin{defn}
Let $\G$ be  a closed  Lie subgroup of $\mathrm{GL}(n,\R)$. A \textbf{$\G$-structure} on $M$ is a reduction of the structure group $\mathrm{GL}(n,\R)$ of the frame bundle $\mathcal{F}(M)$ to $\G$. That is, a $\G$-structure is a  principal sub-bundle $\mathcal{P}(M)$ of $\mathcal{F}(M)$ with fibre $\G$. If $\G$ is one of the groups that appear in the Berger classification theorem (see below),  then $\G$ is called a \textbf{special geometric structure}. 
\end{defn}

An important question pertaining to Riemannian manifolds is, which Lie subgroups of $\rm{GL(n,\R)}$ are also  holonomy groups of the Levi-Civita connection, and what conditions need to be imposed on the Riemannian manifold for this to be the case? Berger's theorem provides an answer to this question for any $n$-dimensional, simply connected manifold  with an irreducible, non-symmetric Riemannian metric (see \cite{Be, Jo, KN, Ra} for more on the concept of Riemannian holonomy).

\begin{thm}[Berger's Theorem]\label{Berger}

Suppose that $M$ is a simply connected manifold of dimension $n$, and that $g$ is an irreducible, non-symmetric Riemannian metric on $M$. Then exactly one of the following seven cases holds. 

\begin{enumerate}

\item $\mathrm{Hol}(g)=\mathrm{SO}(n)$.
\item $n=2m$ with $m\ge 2$, and $\mathrm{Hol}(g)=\rm{U(m)}$ in $\mathrm{SO(2m)}$.
\item $n=2m$ with $m\ge 2$, and $\mathrm{Hol}(g)=\mathrm{SU}(m)$ in $\mathrm{SO}(2m)$.
\item $n=4m$ with $m\ge 2$, and $\mathrm{Hol}(g)=\mathrm{Sp}(m)$ in $\mathrm{SO}(4m)$.
\item $n=4m$ with $m\ge 2$, and $\mathrm{Hol}(g)=\mathrm{Sp}(m)\mathrm{Sp}(1)$ in $\mathrm{SO}(4m)$.
\item $n=7$,  and $\mathrm{Hol}(g)=\mathrm{G_2}$ in $\mathrm{SO(7)}$.
\item $n=8$, and $\mathrm{Hol}(g)=\mathrm{Spin(7)}$ in $\mathrm{SO(8)}$.
\hfill \qed

\end{enumerate}
\end{thm}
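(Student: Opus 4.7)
The plan is to follow Berger's original representation-theoretic strategy, which proceeds in three stages: reduction to irreducible non-symmetric holonomy via the de Rham decomposition, extraction of algebraic constraints on the Lie algebra $\mathfrak{h}\subset\mathfrak{so}(n)$ of the holonomy from the two Bianchi identities, and an exhaustive case analysis driven by the Cartan classification of compact simple Lie algebras.

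First, I would invoke the de Rham decomposition theorem: a simply connected complete Riemannian manifold whose holonomy representation on the tangent space is reducible splits isometrically as a Riemannian product. Combined with the irreducibility and non-symmetry hypotheses of the theorem, this reduces the problem to classifying faithful irreducible representations $\mathfrak{h}\hookrightarrow\mathfrak{so}(n)$ such that the associated Riemann tensor is not parallel. In particular, by ruling out locally symmetric spaces we obtain $\nabla R\not\equiv 0$, which is the key input for the algebraic machine below.

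Second, I would set up Berger's algebraic criterion. Let $\mathcal{K}(\mathfrak{h})$ denote the space of $\mathfrak{h}$-valued algebraic curvature tensors on $\R^{n}$ obeying the first Bianchi identity, and let $\mathcal{K}^{1}(\mathfrak{h})$ denote the analogous space of first covariant derivatives obeying the second Bianchi identity. By the Ambrose--Singer theorem, $\mathfrak{h}$ must be spanned by the operators $R(X,Y)$ as $R$ ranges over $\mathcal{K}(\mathfrak{h})$ and $X,Y$ over $\R^{n}$; and the non-symmetric condition forces $\mathcal{K}^{1}(\mathfrak{h})\neq 0$. These two linear-algebraic obstructions, applied jointly, eliminate most candidate representations before any geometry enters.

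Third, I would run the enumeration. Using the Cartan classification together with Schur's lemma and weight-space arguments, one tabulates the irreducible faithful representations of compact connected Lie groups into $\mathrm{SO}(n)$, dimension by dimension, and computes $\mathcal{K}(\mathfrak{h})$ and $\mathcal{K}^{1}(\mathfrak{h})$ for each candidate via invariant theory. The main obstacle is precisely the length and delicacy of this case analysis: the generic classical series $\mathrm{SO}(n),\mathrm{U}(m),\mathrm{SU}(m),\mathrm{Sp}(m),\mathrm{Sp}(m)\mathrm{Sp}(1)$ are handled uniformly, but the exceptional entries $\mathrm{G}_{2}\subset\mathrm{SO}(7)$ and $\mathrm{Spin}(7)\subset\mathrm{SO}(8)$ require separate identification through the octonion cross-product and through triality, respectively, and are easy to overlook without these structural aids. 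Once the enumeration is complete, only the seven representations listed in the theorem satisfy both algebraic conditions, yielding the classification.
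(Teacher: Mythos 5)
The paper does not prove this theorem: it is Berger's classical 1955 classification, stated here as a recalled result with a $\qed$ and a pointer to the standard references (\cite{Be, Jo, KN, Ra}). There is therefore no in-paper argument to compare yours against. Your sketch is a correct high-level outline of Berger's original representation-theoretic strategy: de Rham reduction, the Ambrose--Singer theorem forcing $\mathfrak{h}$ to be spanned by curvature operators, the two Bianchi constraints (the spaces you call $\mathcal{K}(\mathfrak{h})$ and $\mathcal{K}^{1}(\mathfrak{h})$, i.e.\ the Berger criterion plus non-symmetry), and a Cartan-classification-driven enumeration.

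Two remarks worth absorbing. First, the de Rham step is preparatory rather than part of the proof of the statement as written, since irreducibility is already a hypothesis; it explains why irreducibility is a natural assumption but does no work here. Second, and more substantively: Berger's original argument produced an eighth candidate, $\mathrm{Spin}(9)\subset\mathrm{SO}(16)$, which survives the algebraic sieve you describe; it was only later (Alekseevskii, Brown--Gray) that this case was shown to force local symmetry, hence its absence from the list in the theorem. If you present your three-stage plan as yielding exactly the seven entries, you are implicitly using that post-Berger refinement without flagging it, which is a genuine gap in the enumeration step. Finally, your proposal is an outline, not an executed proof: the tabulation of irreducible representations and the invariant-theoretic computation of $\mathcal{K}(\mathfrak{h})$, $\mathcal{K}^{1}(\mathfrak{h})$ in each case is precisely where the real work lies, and nothing in the sketch actually carries it out.
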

Now via the general holonomy principle, we can identify each holonomy group on Berger's list with parallel tensor fields on the Riemannian manifold. That is, for each holonomy group in Berger's classification list, we can find  $\sigma_1, \ldots,\sigma_k\in\Gamma(\bigotimes T^{*}M\otimes \bigotimes TM)$ such that $\nabla^{g}\sigma_1= \cdots =\nabla^{g}\sigma_k=0$. It is well known that the local descriptions of these parallel tensor fields can be identified with model tensors in $\R^{n}$, and that the holonomy groups in Berger's list can be expressed as the  subgroups of $\textrm{GL}(n,\R)$ that stabilize  these model tensors (see \cite{Jo, KLL, KN, Ra}). Since every oriented Riemannian manifold $(M^{n},g)$ comes with a Riemannian metric $g$ and a volume form $dV_{g}$ that uniquely defines its orientation, and since in $\R^{n}$ the subgroup of $\textrm{GL}(n,\R)$ that stabilizes these two tensors is $\textrm{SO}(n)$, it is immediate that the holonomy group $\textrm{Hol}(g)$ is strictly smaller than $\rm{SO(n)}$  if and only if there  exist additional  nontrivial parallel tensor fields on $M^{n}$ with respect to  the Levi-Civita connection $\nabla^{g}$. Hence, for our purposes,  the holonomy groups in Berger's list can be thought of as stabilizer subgroups of $\mathrm{SO}(n)$ that stabilize additional tensors added to the standard inner product and volume form in $\R^{n}.$  

We now change our focus to Clifford algebras. (For elementary properties of Clifford algebras, see \cite{Ha, LM, Lo}). Let $V$ be a finite-dimensional $\R$ vector space with quadratic form $q$, and let $V^{\otimes}=\bigoplus_k V^{\otimes k}$ be its tensor algebra. We define the \textbf{Clifford algebra} of  the pair $(V,q)$ as the quotient of the tensor algebra by the two-sided ideal $I_q=\langle v\otimes v+q(x)1_{V^{\otimes}}\rangle$; that is, $C_{q}(V)=V^{\otimes}/I_q$. Choosing a basis for $(V,q)$, say $e_1,\ldots,e_n$,  we get the following canonical basis for the Clifford algebra generated by the following $2^n$ monomials:
\begin{equation*}
\label{canonicalbasis}
\{1_V\} \cup \{e_{i_1} \cdots e_{i_k} : i_1 < i_2 < \cdots < i_k,\ k = 1, \ldots , n\}.
\end{equation*}
Any quadratic space over $\R$ is isomorphic to a quadratic space of the form $\R^{p,q}$ with signature $(p,q)$, where $p$ is the number of positive definite generators and $q$ is the number of negative definite generators.\footnote{It should be clear from context when the symbol $q$ means the quadratic form, when it means the second part of the signature $(p,q)$, and when it means the quantization map.} The generators $e_1, \ldots, e_p$ are the positive definite generators, meaning that $q(e_{i})=1$ for $1 \leq i \leq p$; and the generators $e_{p+1}, \ldots, e_{p+q}$ are the negative definite generators, meaning that $q(e_{j})=-1$ for $p+1 \leq j \leq p+q$. For the quadratic spaces  $\R^{p,q}$, we denote the associated Clifford algebras by $\R_{p,q}$. 

It is well known that Clifford algebras are isomorphic to matrix algebras over the division algebras $\R,\C,$ and $\mathbb{H}$. This classification is given in the following theorem (see \cite{Po}).
\begin{thm}
\label{classtheorem}
The Clifford algebra $\R_{p,q}$, where $p+q=n$,  has the following minimal representations over $\R$, $\C$, and $\mathbb{H}$:

\begin{enumerate}
\item[i)] $\R_{p,q}\cong \R(2^{\frac{n}{2}})$ if $q-p = 0,6 \mod 8$.

\item[ii)] $\R_{p,q}\cong \C({2^{\frac{n-1}{2}}})$ if $q-p = 1,5 \mod 8$.

\item[iii)] $\R_{p,q}\cong \mathbb{H}({2^{\frac{n-2}{2}}})$ if $q-p = 2,4 \mod 8$.

\item[iv)] $\R_{p,q}\cong \mathbb{H}({2^{\frac{n-3}{2}}})\oplus \mathbb{H}({2^{\frac{n-3}{2}}})$ if $q-p = 3 \mod 8$.

\item[v)] $\R_{p,q}\cong \R({2^{\frac{n-1}{2}}})\oplus \R({2^{\frac{n-1}{2}}})$ if $q-p = 7 \mod 8$.

\end{enumerate}
\end{thm}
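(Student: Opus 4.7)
The plan is to bootstrap the classification from low-dimensional seed algebras via recursive tensor-product isomorphisms. First I would compute $\R_{p,q}$ directly for $p+q \leq 2$. Using the Clifford relation $e_i e_j + e_j e_i = -2q(e_i)\delta_{ij}$ together with the canonical basis given in the excerpt, one finds $\R_{1,0} \cong \R \oplus \R$ split by the central idempotents $\tfrac{1}{2}(1 \pm e_1)$, $\R_{0,1} \cong \C$ since $e_1^2 = -1$, $\R_{2,0} \cong \R_{1,1} \cong \R(2)$ via explicit Pauli-type matrix representations, and $\R_{0,2} \cong \mathbb{H}$ with $e_1, e_2, e_1 e_2$ playing the roles of $i, j, k$.

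The next step is to establish the three periodicity isomorphisms
\[ \R_{p+1,q+1} \cong \R_{p,q} \otimes_{\R} \R_{1,1}, \qquad \R_{q+2,p} \cong \R_{p,q} \otimes_{\R} \R_{2,0}, \qquad \R_{q,p+2} \cong \R_{p,q} \otimes_{\R} \R_{0,2}. \]
Each is proved by constructing an explicit $\R$-linear map $\R^{p+q+2} \to \R_{p,q} \otimes \R_{*,*}$, verifying that the images satisfy the defining Clifford relations of the target signature, and then invoking the universal property of the Clifford algebra together with a dimension count. Iterating the latter two isomorphisms yields the Cartan--Bott periodicity $\R_{p+8,q} \cong \R_{p,q+8} \cong \R_{p,q} \otimes \R(16)$, where the concrete matrix identifications require the standard division-algebra tensor identities $\mathbb{H} \otimes_{\R} \mathbb{H} \cong \R(4)$, $\C \otimes_{\R} \mathbb{H} \cong \C(2)$, and $\C \otimes_{\R} \C \cong \C \oplus \C$.

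With this machinery, the classification reduces to an induction on $\min(p,q)$. The first periodicity lets us peel off pairs of generators of opposite sign, so only the axis cases $p = 0$ or $q = 0$ remain; the remaining two periodicities then reduce those cases to representatives of the eight residue classes of $q - p \pmod 8$. The main obstacle I anticipate is the careful bookkeeping of signs when the second and third isomorphisms swap the roles of $p$ and $q$ across the tensor product; this swap is forced because conjugation by the generator products $e_1 e_2 \in \R_{2,0}$ or $\R_{0,2}$ flips positive-definite generators into negative-definite ones, so one must be vigilant that the embedded images of the original generators square to the correct sign in the tensor product. Once the eight base cases $\R_{0,k}$ for $0 \leq k \leq 7$ are tabulated against the residue $k \pmod 8$, the five-case statement of the theorem follows by applying the recursions to each residue class.
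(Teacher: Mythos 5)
The paper offers no proof of this theorem: it is imported wholesale from Porteous (\cite{Po}), so there is no internal argument to compare yours against. Your outline is the standard textbook proof of the mod--8 classification --- compute the seed algebras $\R_{p,q}$ for $p+q\le 2$, establish the three tensor-product periodicity isomorphisms, iterate to get $\R_{p,q}\otimes\R(16)$-periodicity, and reduce by induction on $\min(p,q)$ to the eight residue classes --- and it is essentially the Atiyah--Bott--Shapiro/Lawson--Michelsohn argument. The decomposition into a first periodicity that strips off hyperbolic pairs and two further periodicities that handle the axis cases, together with the division-algebra identities $\mathbb{H}\otimes\mathbb{H}\cong\R(4)$, $\C\otimes\mathbb{H}\cong\C(2)$, $\C\otimes\C\cong\C\oplus\C$, is exactly what is needed, and your caveat about the $p\leftrightarrow q$ swap induced by conjugating with $e_1e_2$ is the right thing to be careful about.

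One genuine inconsistency to flag, though it traces back to the paper rather than to your strategy. You open by quoting the relation $e_ie_j+e_je_i=-2q(e_i)\delta_{ij}$, which reflects the paper's ideal $I_q=\langle v\otimes v+q(v)1\rangle$ and hence $v^2=-q(v)$. Combined with the paper's stipulation that $q(e_i)=+1$ for $i\le p$ and $q(e_j)=-1$ for $j>p$, this forces $e_1^2=-1$ in $\R_{1,0}$ and $e_1^2=+1$ in $\R_{0,1}$, so you would get $\R_{1,0}\cong\C$ and $\R_{0,1}\cong\R\oplus\R$ --- the \emph{opposite} of what the theorem claims (which requires $\R_{1,0}\cong\R\oplus\R$ and $\R_{0,1}\cong\C$, i.e.\ $q-p\equiv 7$ and $1$ respectively). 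Your seed computations tacitly switch to the convention $v^2=q(v)$ (positive-definite generators squaring to $+1$), which is the convention Lounesto and Porteous actually use and the one the theorem and the paper's subsequent idempotent constructions are consistent with. This is almost certainly a sign typo in the paper's definition of $I_q$ (it should be $v\otimes v-q(v)1$), but since you quote the erroneous relation verbatim and then quietly use the other sign, you should state explicitly which convention you are adopting so the seed table and the periodicity bookkeeping are unambiguous.
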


It  should be noted that some of the structure of $\R_{p,q}$ does not carry over to the matrix algebra with which it is associated. An alternative way to generate spinor modules, which generate our  representations of these real Clifford algebras, is to classify our Clifford algebras $\R_{p,q}$ in terms of their minimal left ideals, generated by primitive idempotents (see \cite{LW} for the theory behind these constructions). We begin by defining what a minimal left ideal is in the context of a Clifford algebra $\R_{p,q}$. 

\begin{defn}
For the Clifford algebra $\R_{p,q}$, any \textbf{minimal left ideal} is of the form $\R_{p,q} f$ where $f$ is a primitive idempotent: that is, an $f\in \R_{p,q}$ such that $f^{2}=f$ and $f$ cannot be written as the sum of two orthogonal idempotents. The minimal left ideals of the Clifford algebra  $\R_{p,q}$ are called \textbf{spinor spaces}, and the elements of the minimal left ideals are called \textbf{algebraic spinors}. In $\R_{p,q} f$, we define $f$ as the \textbf{identity unit spinor} that is identified with  $1\in\R\subset \R_{p,q}$.

\end{defn}
It is immediate that $\R_{p,q} f$ is a left $\R_{p,q}$ module, where module multiplication is given by $\R_{p,q}\times \R_{p,q} f\rightarrow \R_{p,q}  f $ via $(\phi,\psi f)\mapsto (\phi \psi) f$ for all $\phi\in\R_{p,q}$ and $\psi f\in\R_{p,q} f.$  The following theorem gives us the construction and classification of minimal left ideals in $\R_{p,q}$ (see \cite{LW}).

\begin{thm}\label{Minimal left ideals}
A minimal left ideal of $\R_{p,q}$ is of type $\R_{p,q} f$ where $f=\dfrac{1+e_{t_1}}{2} \cdots \dfrac{1+e_{t_k}}{2}$ is a primitive idempotent in $\R_{p,q}$  and $e_{t_1}, \ldots ,e_{t_k}$ is a set of commuting elements of the canonical basis such that $e_{t_i}^2=1$ for all $i=1, \ldots ,k=q-r_{q-p}$; moreover, the generators form a multiplicative   group of order $2^{q-r_{q-p}}$. The numbers $r_{q-p}$ are called the Radon-Hurwitz numbers,  given by the recurrence formula $r_0=0,\ r_1=1,\ r_2=2,\ r_3=2,\ r_j=3$ where $4\le j\le 7$, $r_{i+8}=r_i+4$ for $i\ge 0$, $r_{-1}=-1$, and $r_{-i}=1-i+r_{i-2}$ for $i\ge 2$.
\hfill \qed
\end{thm}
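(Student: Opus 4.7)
The plan is to combine Wedderburn--Artin theory with the matrix-algebra classification of Theorem \ref{classtheorem} to pin down both the form and the size of a primitive idempotent in $\R_{p,q}$. First, I would verify the basic algebraic facts used to build $f$. For any basis monomial $e_T \in \R_{p,q}$ with $e_T^2 = 1$, a direct expansion gives $\bigl(\tfrac{1+e_T}{2}\bigr)^2 = \tfrac{1+e_T}{2}$, so this element is idempotent. If $e_{t_1}, \ldots, e_{t_k}$ are basis monomials that pairwise commute and each square to $+1$, the corresponding idempotents commute and their product $f$ is again an idempotent. Since each factor projects onto the $+1$-eigenspace of its generator, left multiplication by $f$ halves the ambient real dimension $k$ times, yielding $\dim_{\R}(\R_{p,q} f) = 2^{n-k}$ where $n = p+q$.

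Next, I would use Theorem \ref{classtheorem} to compute the $\R$-dimension of a minimal left ideal directly. A matrix algebra $\mathbb{F}(m)$ over $\mathbb{F} \in \{\R, \C, \mathbb{H}\}$ has, up to isomorphism, a unique minimal left ideal of real dimension $(\dim_{\R} \mathbb{F}) \cdot m$. Reading off $m$ and $\mathbb{F}$ from each of the five cases of Theorem \ref{classtheorem} and matching against $2^{n-k}$ pinpoints $k = q - r_{q-p}$; a short case check against the recurrence $r_0 = 0$, $r_1 = 1$, $r_2 = r_3 = 2$, $r_4 = \cdots = r_7 = 3$, $r_{i+8} = r_i + 4$ confirms agreement in every residue class modulo $8$.

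The main obstacle is the existence step: showing that $\R_{p,q}$ actually admits $q - r_{q-p}$ pairwise commuting, $\R$-linearly independent basis monomials squaring to $+1$. I would approach this combinatorially, using the identity $e_I^2 = (-1)^{\ell(\ell-1)/2 + |I \cap \{p+1,\ldots,p+q\}|}$ for $\ell = |I|$ together with the parity criterion for commutation of two such monomials. Then, via the periodic tensor-product isomorphisms $\R_{p+1,q+1} \cong \R_{p,q} \otimes \R_{1,1}$ and $\R_{p,q+8} \cong \R_{p,q} \otimes \R_{0,8}$, I would reduce to a finite list of base cases indexed by $q - p$ modulo $8$ and explicitly exhibit the required generators by induction on $n$. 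The $2^k$ distinct products of these generators then automatically form an abelian multiplicative group of order $2^k$.

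Finally, to close, I would verify primitivity and exhaustiveness. If $f = f_1 + f_2$ were a nontrivial orthogonal decomposition into idempotents, then $\R_{p,q} f = \R_{p,q} f_1 \oplus \R_{p,q} f_2$ would split, forcing $\dim_{\R}(\R_{p,q} f_1) < 2^{n-k}$ and contradicting the matrix-algebra calculation that identifies $2^{n-k}$ as the smallest positive real dimension of a nonzero left ideal. Conversely, Wedderburn--Artin applied to each simple summand in Theorem \ref{classtheorem} shows that every minimal left ideal has the form $\R_{p,q} f$ for some primitive idempotent $f$, and the construction above realizes every such $f$ in the stated product form.
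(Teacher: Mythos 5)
The paper states Theorem \ref{Minimal left ideals} without giving a proof: it is attributed to Lounesto and Wene \cite{LW} and closed with a tombstone, so there is no in-paper argument to compare against. What follows is therefore an assessment of your sketch on its own terms.

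Your overall strategy is sound and mirrors the standard one: verify the elementary projection identity, count dimensions, match against the matrix-algebra classification in Theorem \ref{classtheorem} to pin down $k=q-r_{q-p}$, and then prove existence by periodicity. The dimension bookkeeping checks out in every residue class mod $8$, including the split cases (iv) and (v) and the negative-index Radon--Hurwitz values. The primitivity argument (a nontrivial orthogonal splitting of $f$ would produce a nonzero left ideal of real dimension strictly less than $2^{n-k}$, contradicting the Wedderburn--Artin minimum) is also correct.

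The one place your sketch is too quick is the sentence ``the $2^k$ distinct products of these generators then automatically form an abelian multiplicative group of order $2^k$.'' Nothing is automatic here: $k$ pairwise commuting basis monomials squaring to $+1$ need not generate a group of order $2^k$, since a nontrivial product of some subset could equal $\pm 1$. If that happens, the corresponding product of the factors $\tfrac{1+e_{t_i}}{2}$ either collapses or vanishes, and the dimension identity $\dim_\R(\R_{p,q}f)=2^{n-k}$ fails. This independence condition is precisely what the clause ``the generators form a multiplicative group of order $2^{q-r_{q-p}}$'' is asserting, and it is the genuine content of the existence step. Your plan to reduce via $\R_{p+1,q+1}\cong\R_{p,q}\otimes\R_{1,1}$ and $\R_{p,q+8}\cong\R_{p,q}\otimes\R_{0,8}$ can indeed carry the construction, but you would need to check that the lifted generators remain pairwise commuting \emph{basis monomials} under the chosen isomorphism and that independence is preserved at each step; that verification, together with explicit base cases for each residue of $q-p$ mod $8$, is where the real work lies and where your sketch currently just gestures.
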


The minimal left ideals constructed in Theorem \ref{Minimal left ideals} are not only left $\R_{p,q}$ modules, but they also take the structure of right $f \,\R_{p,q} f$ modules, where this right module structure gives the minimal left ideal $\R_{p,q} f$ an $\R,\C$, or $\mathbb{H}$ vector space structure (see \cite{Lo}). In order to relate $\G$-structures with minimal left ideals of Clifford algebras, we establish a vector space isomorphism between the Clifford algebra $\R_{p,q}$ and the exterior algebra of $\R^{p+q}$, denoted $\bigwedge^{\bullet} \R^{p+q}$. This isomorphism is achieved via the assignment of the \textbf{quantization map} $q:\bigwedge^{\bullet} \R^{p+q}\rightarrow \R_{p,q}$, where  $e_{i_1}\wedge \cdots \wedge e_{i_k}\mapsto e_{i_1} \cdots e_{i_k}$. The inverse of this isomorphism is called the \textbf{symbol map}, which we denote by $\sigma: \R_{p,q}\rightarrow \bigwedge^{\bullet} \R^{p+q}$, given by $\sigma(e_{i_1} \cdots e_{i_k})=e_{i_1}\wedge \cdots \wedge e_{i_k}$.

These maps have natural extensions onto the spaces of exterior forms on $\R^{p+q}$. We denote these extensions by $q^{*}:\bigwedge^{\bullet}(\R^{p+q})^{*}\rightarrow \R_{p,q}$, where $q^{*}(e^{i_i}\wedge  \cdots \wedge e^{i_k})=e_{i_1} \cdots e_{i_k}$; and $\sigma^{*}:\R_{p,q}\rightarrow \bigwedge^{\bullet} (\R^{p+q})^{*}$, where $\sigma^{*}(e_{i_1} \cdots e_{i_k})=e^{i_1}\wedge \cdots \wedge e^{i_k}$.
Here $e^{1}, \ldots ,e^{n}$ is the dual basis to the canonical basis $e_1, \ldots ,e_n$.



We now define what it means for a minimal left ideal to be recoverable from a $\G$-structure.

\begin{defn}
Let $\phi_1, \ldots ,\phi_{k}\in \bigwedge^{\bullet}(\R^{p+q})^{*}$ define a $\G$-structure. We say that \textbf{the $\G$-structure induces a minimal left ideal} $\R_{p,q}  f$ in the Clifford algebra $\R_{p,q}$ if the primitive idempotent $f$ that defines that minimal left ideal can be recovered  from an  algebraic expression $\psi(\phi_1, \ldots ,\phi_k)$ in the exterior algebra $\bigwedge^{\bullet}(\R^{p+q})^{*}$ via the isomorphism induced from the quantization map; that is, $q^{*}(\psi(\phi_1, \ldots ,\phi_k))=f$.

\end{defn}
For the converse, we make the following definition. 

\begin{defn}
Let $f$ be a primitive idempotent  that defines a minimal left ideal $\R_{p,q}  f$, and let $\phi_1, \ldots ,\phi_{k}\in \bigwedge^{\bullet}(\R^{p+q})^{*}$ define a $\G$-structure. We say that \textbf{the $\G$-structure is recoverable from $\R_{p,q}  f$} if   $\phi_1, \ldots ,\phi_{k}$ can be written as algebraic expressions of the graded components of the   primitive idempotent in the Clifford algebra; that is, $\phi_1=\sigma^{*}(\psi_1(f)), \ldots, \phi_k=\sigma^{*}(\psi_k(f))$ for algebraic expressions $\psi_1(f), \ldots ,\psi_k(f)$ of $f$ in the Clifford algebra $\R_{p,q}$.

\end{defn}

In this paper, unlike in \cite{Su}, we do not assume that the dimension of the underlying real space of the Clifford algebra (where we assign a primitive idempotent) and the dimension of the underlying real space of our exterior algebra (where we identify the $\G$-structure) are equal. Hence, we now introduce the concept of an embedded induced minimal left ideal in order to identify a $\G$-structure with a minimal left ideal in the present case.

\begin{defn}\label{definition: induced embedded}
Let $\phi_1, \ldots ,\phi_{k}\in \bigwedge^{\bullet}(\R^{n})^{*}$ define a $\G$-structure. We say that \textbf{the $\G$-structure induces an embedded minimal left ideal} $\R_{p,q}  f$ in the Clifford algebra $\R_{p,q}$ if the primitive idempotent $f$ can be recovered  from an  algebraic expression  $\psi(\phi_1, \ldots ,\phi_k)$ in $\bigwedge^{\bullet}(\R^{n})^{*}$  via the  embedding of the extended quantization map $q^{*}:\bigwedge^{\bullet}(\R^{n})^{*}\hookrightarrow \R_{p,q}$, where $\bigwedge^{\bullet}(\R^{n})^{*}$ is isomorphic to its image in $\R_{p,q}$ for $p+q > n$.
 
\end{defn}

We can also extend this concept, where the dimension of the Clifford algebra is larger than the dimension of the exterior algebra, to $\G$-structures recovered by minimal left ideals; but instead of an embedding, this is done by restricting the symbol map to a Clifford subalgebra. 

\begin{defn}\label{definition: recovered by projection}
    We say that a $\G$-structure can be \textbf{recovered by projection} if there exists  an element $\tilde{f}$ in a subalgebra $A\subset \R_{p,q}$ (where $p+q>n$) such that $A$ is isomorphic to a Clifford algebra $\R_{r,s}$ (where $r+s=n$), where  the differential forms that makes up the $\G$-structure,  $\phi_{1},\ldots,\phi_{k}\in\bigwedge^{\bullet}(\R^{n})^{*}$, are the images of algebraic expressions $\psi_{1}(\tilde{f}),\ldots,\psi_{k}(\tilde{f})\in A$ via the extension of the symbol map restricted to the subalgebra $A$; that is, $\sigma^{*}\big|_{A}:A\xrightarrow{\cong} \bigwedge^{\bullet}(\R^{n})^{*}$, where $\sigma^{*}\big|_{A}(\psi_1(\tilde{f}))=\phi_{1},\ldots,\sigma^{*}\big|_{A}(\psi_k(\tilde{f}))=\phi_{k}$.

\end{defn}

 We now link the primitive idempotent and the $\G$-structures  recovered by projection. 

 \begin{defn}\label{definition: factor}
 
Let $A$ be a Clifford subalgebra of $\R_{p,q}$ such that $A \cong \R_{r,s}$ where $r+s=n$. Consider a $\G$-structure recovered by projection from $\tilde{f}\in A$. We say that $\tilde{f}$ is a \textbf{factor of the primitive idempotent} that  recovers a $\G$-structure in $\R^{n}$ if the differential forms that makes up the $\G$-structure,  $\phi_{1},\ldots,\phi_{k}\in\wedge^{\bullet}(\R^{n})^{*}$, are given by $\sigma^{*}\big|_{A}(\psi_1(\tilde{f}))=\phi_{1},\ldots,\sigma^{*}\big|_{A}(\psi_k(\tilde{f}))=\phi_{k}$, and the primitive idempotent $h$ that defines the minimal left ideal $\R_{p,q} h$ can be expressed as the product of $\tilde{f}$ and an additional element $e$ in $\R_{p,q}$, that is, $h=\tilde{f} e.$ 
\end{defn}
 
In the next section, we show how we can identify $\textrm{U}(n)$-structures with their induced minimal left ideals, or their induced embedded minimal left ideals, for Clifford algebras of various signatures.


\section{$\textrm{U}(n)$-structures and their induced minimal left ideals}\label{U(n) structures and their induced minimal left ideals}

The matrix group $\textrm{U}(n)$, known as the \textit{unitary group}, is composed of $n\times n$ complex matrices $U$ such that $U^{*}U=I_{n}$, where $U^{*}$ is the Hermitian conjugate of $U$. We can also view $\textrm{U}(n)$ as a subgroup of $\textrm{SO}(2n)$. In order to view $\textrm{U}(n)$ as a stabilizer subgroup of $\textrm{SO}(2n)$, we must first introduce a $\textrm{GL}(n,\C)$-structure on $\R^{2n}$, that is, an \textit{almost complex structure} $J_{0}\in \End(\R^{2m})$ such that $J_{0}^2=-\textrm{id}$.
  After a choice of basis (ours will be the canonical one), we can view $(\R^{2n},J_{0})$ as a complex vector space, where via this identification we reduce the structure group $\textrm{GL}(2n,\R)$ to $\textrm{GL}(n,\C).$ As stated in Section 2, $\textrm{SO}(2n)$ is the stabilizer of the model tensors $g_{0}=\sum_{j=1}^{2n} e^{i}\otimes e^{i}$, and the volume form $dV_{g_{0}}=e^{1}\wedge \cdots \wedge e^{2n}.$ Now if on $(\R^{2n}, J_{0})$ the complex structure is $g_{0}$-orthogonal, we can define a skew-symmetric form $\omega_{0}(\cdot\, ,\cdot):=g_{0}(J\cdot\, ,\cdot) \in \bigwedge^{2} (\R^{2n})^{*}$. This is known as the \textbf{standard Kahler form}. With the aid of the Kahler form on the vector space $(\R^{2n},J_{0})$, we  are able to define a Hermitian metric (a positive definite Hermitian inner product), via $h_{0}(\cdot\,,\cdot)=g_{0}(\cdot\,,\cdot)-i\omega_{0}(\cdot\, ,\cdot)$ whose stabilizer is the unitary group $\textrm{U}(n)$.  Moreover, from the description of the Hermitian metric $h_0$ as a sum of the real part $g_0$ and the imaginary part $\omega_0$, we can view $\mathrm{U}(n)$ as $\mathrm{U}(n)=\mathrm{O}(2n)\cap \mathrm{Sp}(2n)$ where $\mathrm{Sp}(2n)$ is the subgroup of $\textrm{SO}(2n)$ that stabilizes the symplectic form $\omega_0$. Now since, for the triple $(g_{0},J_{0},\omega_0)$, it is possible to generate one of the tensors from the other two, via the identification $\omega_{0}(\cdot,\cdot)=g(J\cdot,\cdot)$ we can identify with a  $\textrm{U}(n)$-structure with  stabilizer subgroup of the tensors  $J_{0}$ and $\omega_0$, that is, $\textrm{U}(n)=\textrm{GL}(n,\C)\cap \textrm{Sp} (2n,\R)$. 
 Hence, from now on, a $\textrm{U}(n)$-structure is identified with the parallel tensors $J_{0}, \omega_{0},$ and $g_{0}$ in $\R^{2n}$, and it is viewed as the stabilizer of any two of the three tensor fields.   
 
For our exploration in identifying $\mathrm{U}(n)$-structures with minimal left ideals of Clifford algebras, we fix a complex structure $J_{0}$ in $\R^{2n}$ such that on the standard basis $e_1, \ldots, e_{2n}$, we have $J_{0}(e_k)=e_{k+n}$ and $J_{0}(e_{k+n})=-e_{k}$ for all $k=1,2,\ldots ,n.$  This complex structure allows us to identify $\R^{2n}$ with $\C^{n}$ via $w_{k}=e_{k}+ie_{n+k}$ for $k=1,2,\ldots,n.$

\begin{lem}\label{Standard Kahler form}
For the complex structure $J_{0}$ such that $J_{0}(e_k)=e_{k+n}$ and $J_{0}(e_{k+n})=-e_{k}$ for all $k=1,2,\ldots,n$, the standard Kahler form can be locally defined in $\R^{2n}$ by the formula 
$\omega_{0}=e^{1}\wedge e^{n+1}+ \cdots +e^{n}\wedge e^{2n}$. 
\end{lem}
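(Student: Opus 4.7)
The plan is to apply the definition $\omega_{0}(\cdot,\cdot)=g_{0}(J_{0}\cdot,\cdot)$ directly to pairs of standard basis vectors and then reconstruct the 2-form from its coordinate values. Since both sides of the claimed equality are elements of $\bigwedge^{2}(\R^{2n})^{*}$, it suffices to check that they agree on every ordered pair $(e_{i},e_{j})$ with $1\le i<j\le 2n$.

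First, I would fix indices $k,\ell\in\{1,\ldots,n\}$ and run through four cases. Using $J_{0}(e_{k})=e_{k+n}$ together with the orthonormality of the basis under $g_{0}$, I get
\begin{equation*}
\omega_{0}(e_{k},e_{\ell})=g_{0}(e_{k+n},e_{\ell})=0,\qquad \omega_{0}(e_{k},e_{n+\ell})=g_{0}(e_{k+n},e_{n+\ell})=\delta_{k\ell}.
\end{equation*}
Using $J_{0}(e_{n+k})=-e_{k}$, the remaining two cases give
\begin{equation*}
\omega_{0}(e_{n+k},e_{\ell})=-g_{0}(e_{k},e_{\ell})=-\delta_{k\ell},\qquad \omega_{0}(e_{n+k},e_{n+\ell})=-g_{0}(e_{k},e_{n+\ell})=0.
\end{equation*}
Thus $\omega_{0}$ vanishes on every ordered pair of basis vectors except those of the form $(e_{k},e_{n+k})$, on which it evaluates to $1$, and $(e_{n+k},e_{k})$, on which it evaluates to $-1$.

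Second, I would invoke the standard fact that a 2-form is determined by its values on ordered pairs $(e_{i},e_{j})$ with $i<j$, together with the identity $(e^{i}\wedge e^{j})(e_{a},e_{b})=\delta_{i}^{a}\delta_{j}^{b}-\delta_{i}^{b}\delta_{j}^{a}$. Comparing coefficients, the only nonzero components of $\omega_{0}$ in the basis $\{e^{i}\wedge e^{j}\}_{i<j}$ of $\bigwedge^{2}(\R^{2n})^{*}$ occur at $(i,j)=(k,n+k)$ with coefficient $1$, which yields the claimed expression $\omega_{0}=\sum_{k=1}^{n}e^{k}\wedge e^{n+k}$.

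There is no genuine obstacle here: the statement is a coordinate verification, and the only thing to be careful about is the sign conventions implicit in $\omega_{0}(\cdot,\cdot)=g_{0}(J_{0}\cdot,\cdot)$, which is why I would write out all four index cases explicitly rather than relying on antisymmetry alone. Once those signs are pinned down, the identification of $\omega_{0}$ with the stated sum of wedge products is immediate.
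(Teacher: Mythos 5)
Your proposal is correct and takes essentially the same approach as the paper: a direct coordinate verification of $\omega_{0}=g_{0}(J_{0}\cdot,\cdot)$ against the claimed sum of wedges. The only cosmetic difference is that the paper expands arbitrary vectors $v,w$ in coordinates and reads off the form from the resulting bilinear expression, whereas you evaluate on pairs of basis vectors and reconstruct by comparing coefficients; these are the same computation presented in two equivalent ways.
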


\begin{proof}
For the complex structure given by $J_{0}$ such that $J_{0}(e_k)=e_{k+n}$, and $J_{0}(e_{k+n})=-e_{k}$ for all $k=1,2,\ldots,n$, and the standard inner product in $\R^{2n}$ given by $g_{0}=\sum_{i=1}^{2n}e^{i}\otimes e^{i}$, we define the standard Kahler form via $\omega_{0}(\cdot\, ,\cdot)=g_{0}(J_{0}\cdot\,,\cdot).$ For any two vectors $v,w\in \R^{2n}$, we have $\omega_{0}(v,w)=g_{0}(J_0(v),w). $  Then we obtain 
\begin{align*}
\omega_{0}(v,w) & =g_{0}(J_0(v),w) = g_{0}(J_0\big(\sum_{\alpha=1}^{2n} v_{\alpha}\big),\sum_{\beta=1}^{2n}w_{\beta}) \\
& = g_{0}\big(\sum_{\alpha=1}^{n}v^{\alpha}e_{\alpha+n}-v^{\alpha+n}e_{\alpha},\sum_{\beta=1}^{2n}w^{\beta}e_{\beta}\big) \\
& = \sum_{\alpha=1}^{n} \big\{g_{0}(v^{\alpha}e_{\alpha+n},w^{\alpha+n}e_{\alpha+n})-g_{0}(v^{\alpha+n}e_{\alpha},w^{\alpha}e_{\alpha})\big\} \\ 
& = \sum_{\alpha=1}^{n} (v^{\alpha} w^{\alpha+n}-v^{\alpha+n}w^{\alpha}) \\
& = \sum_{\alpha=1}^{n} e^{\alpha}\otimes e^{\alpha+n}-e^{\alpha+n}\otimes e^{\alpha}(v,w) \\
& = \sum_{\alpha=1}^{n} e^{\alpha}\wedge e^{\alpha+n}(v,w).
\end{align*}
Therefore the local description of this symplectic form associated with $J_{0}$ is $\omega_{0}=e^{1}\wedge e^{1+n}+ \cdots +e^{n}\wedge e^{2n}$. It follows that $d\omega_{0}=0$; hence the symplectic form $\omega_0$ is Kahler.
\end{proof}

Now if we take the wedge product of the Kahler form with itself, we obtain $\omega_{0}\wedge\omega_{0}=(e^{1}\wedge e^{1+n}+ \cdots +e^{n}\wedge e^{2n})\wedge(e^{1}\wedge e^{1+n}+ \cdots +e^{n}\wedge e^{2n})=\sum_{i\neq j} (e^{i}\wedge e^{i+n})\wedge (e^{j}\wedge e^{j+n})$.\ Since  $(e^{i}\wedge e^{i+n})\wedge (e^{j}\wedge e^{j+n})= (e^{j}\wedge e^{j+n})\wedge (e^{i}\wedge e^{i+n})$ for $i\neq j$, we get can rewrite the sum as  

\[\omega_{0}\wedge \omega_{0}=2\sum_{i<j}^n e^{i}\wedge e^{i+n}\wedge e^j\wedge e^{j+n}.\]

Modifying the notation to write $\omega_{0}^2:=\omega_{0}\wedge \omega_{0}$, we have

 $$ \dfrac{\omega_{0}^2}{2}=\sum_{i<j}^n e^{i}\wedge e^{i+n}\wedge e^j\wedge e^{j+n}.$$ 
Extending this analysis to  products of the  form $e^{i_1}\wedge e^{i_1+n}\wedge \cdots\wedge e^{i_{m}}\wedge e^{i_{m}+n}$, where $1\leq i_{k}\leq n$ for $k=1,2,\ldots,m$, we make use of the following two properties.
 \begin{enumerate}
    \item   If, for any $k,j$ such that  $1\leq k,j \leq m \leq n$ and  $k\neq j$, we have $i_{k}=i_{j}$, then the entire product is zero; that is, $e^{i_1}\wedge e^{i_1+n}\wedge \cdots\wedge e^{i_{m}}\wedge e^{i_{m}+n}=0.$
    \item Since all products of the form $e^{i_j} \wedge e^{i_j + n}$ commute, we can rearrange the product $e^{i_1}\wedge e^{i_1+n}\wedge \cdots\wedge e^{i_{m}}\wedge e^{i_{m}+n}$ in $m!$ different ways. 
 \end{enumerate}
 Now for an $m$ product, $m\leq n$, of Kahler forms, we obtain 
\[\omega_{0}^m=\sum_{ 1\leq i_1,\ldots,i_{m}\leq n}
e^{i_1}\wedge e^{i_1+n}\wedge \cdots\wedge e^{i_{m}}\wedge e^{i_{m}+n}.\]
By property $(1)$, it is immediate that only nonzero wedge products are associated  to the indices where $i_1,\ldots,i_{m}$ are all distinct from one another. Moreover, from $(2)$, we can rearrange all of the nonzero sums in the same sequential order, namely one where $1\leq i_1<\cdots<i_{m}\leq n.$ From this we can immediately see that 
\[\omega_{0}^m=\sum_{ 1\leq i_1<\cdots <i_{m}\leq n}
m! e^{i_1}\wedge e^{i_1+n}\wedge \cdots\wedge e^{i_{m}}\wedge e^{i_{m}+n},\] or, equivalently,
\[\dfrac{\omega_{0}^m}{m!}=\sum_{ 1\leq i_1<\cdots <i_{m}\leq n}
 e^{i_1}\wedge e^{i_1+n}\wedge \cdots\wedge e^{i_{m}}\wedge e^{i_{m}+n}.
 \]

With $\dfrac{\omega_{0}^{m}}{m!}$ well-defined for $0\leq m\leq n$, where for the  $m=0$ case we define  $\omega_{0}^{0}=1$, we can now define an all one polynomial in the Kahler form $\omega_{0}$, which we state in the following lemma.

\begin{lem}
For the standard Hermitian form in $\R^{2n}$ locally defined in $\R^{2n}$ by the formula $\omega_{0}=e^{1}\wedge e^{1+n}+ \cdots +e^{n}\wedge e^{2n}$, we define the Kahler polynomial associated with $\omega_{0}$ as the differential form
\[
P(\omega_{0})=\sum_{m=0}^{n}\dfrac{\omega_{0}^m}{m!}.
\]
The Kahler polynomial $P(\omega_{0})$ is, in fact, an all one polynomial, as the leading coefficients of all of the terms are $1$. \hfill \qed
\end{lem}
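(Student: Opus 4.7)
The plan is to simply verify that when $P(\omega_0)$ is expanded into the canonical monomial basis of $\bigwedge^\bullet(\R^{2n})^*$, every monomial that appears does so with coefficient $1$.

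First I would observe that the computation immediately preceding the lemma has already done essentially all of the work: it shows that for each $1 \le m \le n$,
\[
\dfrac{\omega_{0}^m}{m!}=\sum_{ 1\leq i_1<\cdots <i_{m}\leq n} e^{i_1}\wedge e^{i_1+n}\wedge \cdots\wedge e^{i_{m}}\wedge e^{i_{m}+n},
\]
and by convention $\omega_0^0/0! = 1$. The key point is that the factorial $m!$ in the denominator exactly cancels the $m!$ that arises from reordering the commuting blocks $e^{i_j}\wedge e^{i_j+n}$ (property (2) in the discussion above), and terms with repeated indices vanish by property (1).

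Next I would note that each summand appearing in $\omega_0^m/m!$ is already written as a canonical basis element of $\bigwedge^{2m}(\R^{2n})^*$, since by convention we list $i_1<\cdots<i_m$ and pair each $i_k$ with its companion $i_k+n$ in order; the indices within any one monomial are thus strictly increasing in the pattern $i_1,i_1+n,i_2,i_2+n,\ldots$ with all $i_j\le n<i_j+n$. Distinct subsets $\{i_1<\cdots<i_m\}\subseteq\{1,\ldots,n\}$ produce distinct canonical basis monomials, so there is no cancellation within $\omega_0^m/m!$, and each of the resulting $\binom{n}{m}$ basis monomials carries coefficient $1$.

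Finally, since $\omega_0^m/m!$ is homogeneous of degree $2m$, terms coming from different values of $m$ lie in different graded components and cannot combine. Summing over $m=0,1,\ldots,n$ therefore gives a sum of $\sum_{m=0}^n \binom{n}{m}=2^n$ distinct canonical basis monomials, each appearing with coefficient $1$, which is exactly what it means for $P(\omega_0)$ to be an all-one polynomial.

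There is no real obstacle here; the only subtle bookkeeping point is the sign/coefficient check in step two, namely that the canonical ordering of the factors within each $e^{i_1}\wedge e^{i_1+n}\wedge\cdots\wedge e^{i_m}\wedge e^{i_m+n}$ agrees with the lexicographic canonical basis convention, so no stray signs are produced.
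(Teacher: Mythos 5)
Your overall approach is the same as the paper's: the lemma's proof is essentially just the computation preceding it, which you correctly invoke, and your graded-component argument for why distinct $m$ do not interfere is fine. However, your second and fourth paragraphs contain a genuine error. You claim that within a summand $e^{i_1}\wedge e^{i_1+n}\wedge\cdots\wedge e^{i_m}\wedge e^{i_m+n}$ the indices are ``strictly increasing in the pattern $i_1,i_1+n,i_2,i_2+n,\ldots$'' and hence that these monomials are already in lexicographic canonical form with ``no stray signs.'' This is false whenever $m\ge 2$: since $i_1+n>n\ge i_2$, the sequence $i_1,\,i_1+n,\,i_2,\,\ldots$ is not increasing. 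Sorting the $2m$ indices into genuine lexicographic order $(i_1,\ldots,i_m,i_1+n,\ldots,i_m+n)$ costs $\binom{m}{2}$ adjacent transpositions, so the lexicographic coefficient is $(-1)^{\binom{m}{2}}$, not $+1$. You can see this in the paper's own $\mathrm{U}(3)$ example: the rational K\"ahler polynomial is written with terms $e^{1425},e^{1436},e^{2536},e^{142536}$ (paired order, coefficient $+1$), but the primitive idempotent $f_{\mathrm{U}(3)}$ is displayed with $-e_{1245},-e_{1346},-e_{2356},-e_{123456}$ after lexicographic reordering. The lemma as stated is still correct, because it asserts coefficient $1$ for the ``paired'' monomials $e^{i_1}\wedge e^{i_1+n}\wedge\cdots\wedge e^{i_m}\wedge e^{i_m+n}$ appearing in the displayed expansion of $P(\omega_0)$, not for the lexicographically ordered canonical basis. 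You should drop the claim that these coincide with the lexicographic basis and simply observe that distinct index sets $\{i_1<\cdots<i_m\}\subseteq\{1,\ldots,n\}$ yield linearly independent monomials, so no cancellation occurs and each appears with coefficient exactly $1$.
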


The  expanded form of the Kahler  polynomial is given by 
\[
P(\omega_{0})= 1 + \sum_{m=1}^{n} \, \sum_{ 1\leq i_1<\cdots <i_{m}\leq n}
 e^{i_1}\wedge e^{i_1+n}\wedge \cdots\wedge e^{i_{m}}\wedge e^{i_{m}+n}.
 \]

In order for us to relate the $\textrm{U}(n)$-structures to minimal left ideals for certain Clifford algebras of signature $(p,q)$, we modify the definition of the Kahler polynomial slightly. 

\begin{defn}\label{Rational Kahler}
For the Kahler polynomial $P(\omega_{0})$ associated with the Kahler form $\omega_{0}$ in $\R^{2n}$, we define the \textbf{rational Kahler polynomial} by $P^{\mathbb{Q}}(\omega_{0})=\dfrac{1}{2^{n}}P(\omega_{0}).$
\end{defn}

The rational Kahler polynomial $P^{\mathbb{Q}}(\omega_{0})$ provides the necessary algebraic expression in order for our $\textrm{U}(n)$-structure to induce a minimal left ideal. 

\begin{thm}\label{Kahler}
The $\textrm{U(p)}$-structure $(J_{0}\,\omega_{0},g_{0})$ in $\R^{2p}$, where the complex structure is given by $J_{0}(e_k)=e_{k+p}$ and $J_{0}(e_{k+p})=-e_{k}$ for $k=1,2,\ldots,p$, and where the  Kahler form is locally defined by the formula 
$\omega_{0}=e^{1}\wedge e^{1+p}+ \cdots +e^{p}\wedge e^{2p}$, induces a minimal left ideal on the Clifford algebras $\R_{p,p}$, and induces an embedded minimal left ideal for the Clifford algebras  $\R_{p,p+1}$ and $\R_{p,p+2}$ for $p\geq 1$. The defining primitive idempotent in terms of the $\textrm{U}(p)$-structure is   given by $f_{\textrm{U}(p)}=q^{*}(P^{\mathbb{Q}}(\omega_{0}))$. 
\end{thm}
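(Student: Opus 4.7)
The plan is to show that $f_{\textrm{U}(p)} := q^*(P^{\mathbb{Q}}(\omega_0))$ coincides with a primitive idempotent of the form prescribed by Theorem \ref{Minimal left ideals}. Concretely, I will argue that
\[
f_{\textrm{U}(p)} \;=\; \prod_{i=1}^{p} \dfrac{1 + e_i e_{i+p}}{2},
\]
and then invoke Theorem \ref{Minimal left ideals} in each of the three target signatures to conclude that this is indeed primitive.

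First I would feed the closed-form expansion $\dfrac{\omega_0^m}{m!} = \sum_{1 \leq i_1 < \cdots < i_m \leq p} e^{i_1}\wedge e^{i_1+p}\wedge \cdots \wedge e^{i_m}\wedge e^{i_m+p}$, obtained in the discussion preceding the theorem, into the extended quantization map. Since each wedge on the right involves $2m$ distinct basis vectors, $q^*$ sends it to the corresponding Clifford monomial without any signs, giving
\[
q^*(P^{\mathbb{Q}}(\omega_0)) \;=\; \dfrac{1}{2^p}\sum_{S \subseteq \{1,\ldots,p\}}\prod_{i\in S} e_i e_{i+p}.
\]
Next I would verify by direct Clifford computation that the elements $\xi_i := e_i e_{i+p}$ satisfy $\xi_i^2 = -e_i^2 e_{i+p}^2 = -(1)(-1) = 1$ and that they pairwise commute: swapping $\xi_i\xi_j$ to $\xi_j\xi_i$ for $i\neq j$ requires four transpositions of anticommuting generators, for a net sign of $(-1)^4=1$. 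Commutativity then allows the binomial expansion $\prod_{i=1}^{p}(1+\xi_i) = \sum_{S\subseteq\{1,\ldots,p\}}\prod_{i\in S}\xi_i$, matching the display above.

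To finish, I would check the hypotheses of Theorem \ref{Minimal left ideals} in each signature. The Radon-Hurwitz recipe gives $r_0=0,\ r_1=1,\ r_2=2$, so the required generator count $k = q - r_{q-p}$ equals $p$ in all three cases $\R_{p,p}, \R_{p,p+1}, \R_{p,p+2}$, matching the number of factors in our product; the $\xi_i$ are commuting canonical-basis monomials squaring to $1$, and they generate a multiplicative group of order $2^p$, as required. For the induced-versus-embedded distinction, note that $P^{\mathbb{Q}}(\omega_0)\in\bigwedge^{\bullet}(\R^{2p})^*$: in the equidimensional case $\R_{p,p}$ the map $q^*$ is the full quantization isomorphism, so the minimal left ideal is induced in the original sense of \cite{Su}, whereas for $\R_{p,p+1}$ and $\R_{p,p+2}$ the same exterior expression is interpreted via the embedding $\bigwedge^{\bullet}(\R^{2p})^*\hookrightarrow\R_{p,q}$ onto the Clifford subalgebra generated by $e_1,\ldots,e_{2p}$, matching Definition \ref{definition: induced embedded}.

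The main obstacle is the sign bookkeeping required to confirm that the $\xi_i$ commute uniformly across mixed signature, together with the observation that the Radon-Hurwitz formula produces exactly $p$ commuting generators in all three target signatures; once these are secured, the remainder is a direct combinatorial identification of the sum-over-subsets expansion of $q^*(P^{\mathbb{Q}}(\omega_0))$ with the prescribed product form.
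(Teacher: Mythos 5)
Your proposal is correct and follows essentially the same approach as the paper: compute $q^*(P^{\mathbb{Q}}(\omega_0))$ as a sum over subsets of products of the commuting involutions $e_i e_{i+p}$, factor it into $\prod_{i=1}^p \frac{1+e_i e_{i+p}}{2}$, and invoke Theorem \ref{Minimal left ideals} together with the Radon-Hurwitz count $k = q - r_{q-p} = p$ in all three signatures. The only cosmetic difference is that you obtain the factorization in one step via the binomial expansion $\prod_{i=1}^p(1+\xi_i) = \sum_{S\subseteq\{1,\ldots,p\}}\prod_{i\in S}\xi_i$ (justified by pairwise commutativity), whereas the paper arrives at the same product by recursively peeling off the $e_{\sigma_p}$ term via $f_p = f_{p-1}(1 + e_{\sigma_p})$; your route is slightly more direct, and you also make the sign bookkeeping for $\xi_i^2=1$ and $[\xi_i,\xi_j]=0$ explicit where the paper merely asserts it.
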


\begin{proof}
We begin with a $\textrm{U}(p)$-structure $(J_{0},\omega_{0})$ on $\R^{2p}$ where the complex structure is given by $J_{0}(e_k)=e_{k+p}$ and $J_{0}(e_{k+p})=-e_{k}$, for $k=1,2,\ldots,p$. The Kahler form   $\omega_{0}=e^{1}\wedge e^{1+p}+ \cdots +e^{p}\wedge e^{2p}$ then defines the rational Kahler polynomial $P^{\mathbb{Q}}(\omega_{0})\in\bigwedge^{\bullet}(\R^{2p})^{*}$. 
Shifting our attention to  Clifford algebras of the form $\R_{p,p}$, $\R_{p,p+1}$, and $\R_{p,p+2}$ where $p\ge 1$, we introduce  the index notation $\sigma_{j}=(j,p+j)$ for $j=1,2,\ldots,p$, where we have, for any $1\leq i <j\leq p$ the commutation property $e_{\sigma_{i}}e_{\sigma_{j}}=e_{\sigma_{j}}e_{\sigma_{i}}$. 
 From this, we define the following element for $m\leq p$:
\[f_{m}= 1 + \sum_{k=1}^{m}\sum_{1\leq i_1<\cdots <i_{k}\leq p} e_{\sigma_{i_1}}\cdots e_{\sigma_{i_{k}}}.
\]
When $m=p$, we denote the element  $\dfrac{1}{2^{p}}f_{p}$ by $f_{\textrm{U}(p)}$. 

Now the  rational Kahler polynomial given by $\omega_{0}$ in $\R^{2p}$ is explicitly given by the formula
\[
P^{\mathbb{Q}}(\omega_{0})=\dfrac{1}{2^{p}} \left( 1+\sum_{m=1}^{p} \, \sum_{ 1\leq i_1<\cdots <i_{m}\leq p}
 e^{i_1}\wedge e^{i_1+p}\wedge \cdots\wedge e^{i_{m}}\wedge e^{i_{m}+p} \right).
 \]
Applying  the extended quantization map, $q^{*}:\bigwedge^{\bullet}(\R^{2p})^{*}\rightarrow \R_{p,p}$, to the rational Kahler polynomial, we obtain  
\begin{align*}
q^{*}(P^\Q(\omega_{0})) & =\dfrac{1}{2^{p}} \left( 1 +  \sum_{m=1}^{p} \, \sum_{ 1\leq i_1<\cdots <i_{m}\leq p}
 q^*(e^{i_1}\wedge e^{i_1+p}\wedge \cdots\wedge e^{i_{m}}\wedge e^{i_{m}+p}) \right) \\
& = \dfrac{1}{2^{p}} \left( 1+ \sum_{m=1}^{p} \, \sum_{ 1\leq i_1<\cdots <i_{m}\leq p}
 e_{i_1} e_{i_1+p}\cdots  e_{i_{m}} e_{i_{m}+p} \right) \\
&  = \dfrac{1}{2^{p}} \left( 1 + \sum_{m=1}^{p} \, \sum_{ 1\leq i_1<\cdots <i_{m}\leq p}
 e_{\sigma_1}\cdots e_{\sigma_{m}} \right)\\
& = \dfrac{1}{2^{p}}f_{p}=:f_{\textrm{U}(p)}.
\end{align*}
Moreover, if we use the extended quantization map $q^{*}$ to embed the algebra $\bigwedge^{\bullet}(\R^{2p})^{*}$ into its isomorphic image in $\R_{p,p+1}$ and $\R_{p,p+2}$, we again have the result $q^{*}(P^{\mathbb{Q}}(\omega_{0}))=f_{\textrm{U}(p)}$. Thus we can conclude that $q^{*}(P^{\mathbb{Q}}(\omega_{0}))=f_{\textrm{U}(p)}$ in the Clifford algebras $\R_{p,p},\R_{p,p+1},$ and $\R_{p,p+2}$.
To show that these are minimal left ideals, we begin with the fact that we can  decompose  $f_{p}$ in the following manner:
\[
f_{p}=(f_{p-1}+f_{p-1}e_{\sigma_{p}}),
\]
Where the expanded form is
\[
f_{p}= \left( 1 + \sum_{k=1}^{p-1}\sum_{1\leq i_1<\cdots <i_{k}\leq p-1} e_{\sigma_{i_1}}\cdots e_{\sigma_{i_{k}}} \right) + \left( 1 + \sum_{k=1}^{p-1}\sum_{1\leq i_1<\cdots <i_{k}\leq p-1} e_{\sigma_{i_1}}\cdots e_{\sigma_{i_{k}}} \right) e_p.
\]
Thus, we can decompose the image $q^{*}(P^{\mathbb{Q}}(\omega_{0}))=f_{\textrm{U}(p)}$ as $q^{*}(P^{\mathbb{Q}}(\omega_{0}))=\dfrac{1}{2^{p}}(f_{p-1}+f_{p-1}e_{\sigma_{p}}).$ 
  Now if we split off  the $e_{p-1}$ term in $f_{p-1}$ in an identical manner, we obtain $f_{p-1}=f_{p-2}+f_{p-2} e_{\sigma_{p-1}}$, from which  we obtain $f_{\textrm{U}(p)}=\dfrac{1}{2^{p}}f_{p-1}(1+e_{\sigma_{p}})=\dfrac{1}{2^{p}}f_{p-2}(1+e_{\sigma_{p-1}})(1+e_{\sigma_{p}}).$
  Continuing with this process until we get the full factorization for $q^{*}(P^{\mathbb{Q}}(\omega_{0}))=f_{\textrm{U}(p)}$ gives us the following: 
\[f_{\textrm{U}(p)}=\dfrac{1}{2^{p}}\prod_{i=1}^p (1+e_{\sigma_{i}})=\prod _{i=1}^p \big(\dfrac{1+e_{\sigma_{i}}}{2}\big).
\]
As we established above, all of the $e_{\sigma_{i}}$ for $i=1,2,3,\ldots,p$ are commuting involutions in the Clifford algebras $\R_{p,p}, \R_{p,p+1},$ and $\R_{p,p+2}$; and from Theorem \ref{Minimal left ideals}, we can see that for these Clifford algebras the number of commuting involutions needed to generate a primitive idempotent  is $p$.
     We provide the calculation of this fact in Table \ref{commuting involution table}.

\begin{table}[ht] 
    \centering
    \caption{The number of commuting involutions to generate a minimal left ideal}\label{commuting involution table}
    \begin{tabular}{|l|l|}
        \hline
        The Clifford algebra  & $k=q-r_{q-p}$\\
        \hline
       $\R_{p,p}$& $k=p-r_{0}=p$ \\
        \hline
        $\R_{p,p+1}$ &  $k=p-r_{1}=p+1-1=p$\\
        \hline
        $\R_{p,p+2}$ & $k=p-r_{2}=p+2-2=p$ \\
        \hline
    \end{tabular}
\end{table}

Thus it follows that  $f_{\textrm{U}(p)}$ is a primitive idempotent for $\R_{p,p}, \R_{p,p+1},$ and $\R_{p,p+2}$. Hence the $\textrm{U}(p)$-structure on $\R^{2p}$ induces a minimal left ideal for the Clifford algebras $\R_{p,p}$, and induces an embedded minimal left ideal for the Clifford algebras  $\R_{p,p}, \R_{p,p+1},$ and $\R_{p,p+2}.$

\end{proof}
Since $\textrm{U}(n)$ contains $\textrm{SU}(n)$ as a subgroup, we have that $\textrm{SU(n)}$, when viewed as a stabilizer subgroup of $\textrm{SO}(2n)$, will be the stabilizer of the same tensor fields as $\textrm{U}(n)$ with one additional structure. That is, it is also the stabilizer of the complex volume form $\Omega=dz^1\wedge\cdots\wedge dz^n$, where the complex coordinates in our construction are given by $dz^{j}=e^{j}+i  e^{p+j}$ for $j=1,\ldots,n.$ 
Now  since $\textrm{U(p)}$-structures on $\R^{2p}$ induce minimal left ideals (resp.\ embedded minimal left ideals) on the Clifford algebras $\R_{p,p}$ (resp.\ $\R_{p,p+1}$ and $\R_{p,p+2}$) with the aid of the rational Kahler polynomial, it is immediate that $\textrm{SU}(p)$-structures on $\R^{2p}$ also induce minimal left ideals (resp.\ embedded minimal left ideals)  on Clifford algebras $\R_{p,p}$ (resp.\ $\R_{p,p+1}$ and $\R_{p,p+2}$) with the aid of the same rational Kahler polynomial. Thus, when using an $\textrm{SU}(p)$-structure to generate minimal left ideals, we make no immediate use of  the complex volume form. We therefore have the following corollary.

\begin{cor}
Let the complex structure $J_0$ and the Kahler form $\omega_0$ define a $U(p)$-structure that induces a minimal left ideal (resp.\ an embedded minimal left ideal) for the Clifford algebras $\R_{p,p}$ (resp.\ $\R_{p, p+1}$ and $\R_{p,p+2}$) as in Theorem \ref{Kahler}. Then any $SU(p)$-structure where the complex structure and Kahler form coincide with the $U(p)$-structure also induces a minimal left ideal (resp.\ embedded minimal left ideal) for the Clifford algebra $\R_{p,p}$ (resp.\ $\R_{p,p+1}$ and $\R_{p,p+2}$). Moreover, the primitive idempotent that defines the minimal left ideal is given by $f_{\textrm{SU}(p)}=q^{*}(P^{\mathbb{Q}}(\omega_{0})).$ 
\hfill \qed
\end{cor}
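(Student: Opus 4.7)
The plan is to derive the corollary almost immediately from Theorem \ref{Kahler} by exploiting the subgroup inclusion $\textrm{SU}(p)\subset\textrm{U}(p)$. First I would observe that an $\textrm{SU}(p)$-structure, viewed as a stabilizer subgroup of $\textrm{SO}(2p)$, fixes all the model tensors $(J_0,\omega_0,g_0)$ that define the $\textrm{U}(p)$-structure, together with the additional complex volume form $\Omega=dz^1\wedge\cdots\wedge dz^p$. The hypothesis of the corollary asks precisely that the complex structure and the Kahler form of the $\textrm{SU}(p)$-structure coincide with those of the ambient $\textrm{U}(p)$-structure from Theorem \ref{Kahler}, so the very same symplectic form $\omega_0=e^1\wedge e^{p+1}+\cdots+e^p\wedge e^{2p}$ is present.

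Second, since $\omega_0$ is unchanged, the rational Kahler polynomial $P^{\mathbb{Q}}(\omega_0)=\frac{1}{2^p}\sum_{m=0}^{p}\frac{\omega_0^m}{m!}$ of Definition \ref{Rational Kahler} is literally the same element of $\bigwedge^{\bullet}(\R^{2p})^{*}$ in both settings. Applying the extended quantization map $q^{*}$ (embedded into $\R_{p,p+1}$ and $\R_{p,p+2}$ via $\bigwedge^{\bullet}(\R^{2p})^{*}\hookrightarrow \R_{p,q}$ when $2p<p+q$) therefore yields exactly the element produced in Theorem \ref{Kahler}. Relabeling this idempotent as $f_{\textrm{SU}(p)}$ gives the desired identity $f_{\textrm{SU}(p)}=q^{*}(P^{\mathbb{Q}}(\omega_0))$, and the fact that $f_{\textrm{SU}(p)}=\prod_{i=1}^{p}\bigl(\tfrac{1+e_{\sigma_i}}{2}\bigr)$ is a primitive idempotent in each of $\R_{p,p},\R_{p,p+1},\R_{p,p+2}$ has already been verified in the proof of Theorem \ref{Kahler}. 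Consequently the minimal left ideal (respectively, embedded minimal left ideal) $\R_{p,p}f_{\textrm{SU}(p)}$ (resp.\ $\R_{p,p+1}f_{\textrm{SU}(p)}$ and $\R_{p,p+2}f_{\textrm{SU}(p)}$) is induced by the $\textrm{SU}(p)$-structure in the sense of Definition \ref{definition: induced embedded}.

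The only point that merits any care, and which I would make explicit in the proof, is the reason the complex volume form $\Omega$ plays no role: the algebraic expression $P^{\mathbb{Q}}(\omega_0)$ used to produce the primitive idempotent involves only $\omega_0$, so the additional tensor that distinguishes an $\textrm{SU}(p)$-structure from a $\textrm{U}(p)$-structure is simply not needed in the induction. There is no genuine obstacle here; the corollary is a direct transport of Theorem \ref{Kahler} across the subgroup inclusion $\textrm{SU}(p)\subset\textrm{U}(p)$, and the argument reduces essentially to the remark that the construction of $f_{\textrm{U}(p)}$ depends only on data shared by every $\textrm{SU}(p)$-structure with matching $J_0$ and $\omega_0$.
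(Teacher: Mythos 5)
Your proposal is correct and matches the paper's reasoning essentially word for word: the paper also derives the corollary by observing that $\mathrm{SU}(p)\subset\mathrm{U}(p)$ stabilizes the same $J_0$ and $\omega_0$ (plus the complex volume form, which is unused), so the same rational Kahler polynomial gives the same primitive idempotent via Theorem \ref{Kahler}. No differences to report.
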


As is well known (see \cite{Lo}), the minimal left ideals we described, $\R_{p,s} f_{\textrm{U}(p)}$ for $s=p, p+1, p+2$, are themselves right $\mathfrak{D}=f_{\textrm{U}(p)}\R_{p,s} f_{\textrm{U}(p)}$-modules. The  minimal left ideals $\R_{p,p} f_{\textrm{U}(p)}, \R_{p,p+1} f_{\textrm{U}(p)},$ and $\R_{p,p+2} f_{\textrm{U}(p)}$  can be seen as vector spaces of real dimension  $2^{p}, 2^{p+1},$ and $2^{p+2}$ respectively, where the minimal left ideal representations can be given  by the left multiplication  endomorphism:
\[
\begin{aligned}
     L:\R_{p,s}\rightarrow \textrm{End}_{\mathfrak{D}}(\R_{p,s} f_{\textrm{U}(p)}).
 \   
\end{aligned}
\]
Given the matrix algebra isomorphism for each Clifford algebra, it is immediate that $\R_{p,p} f_{\textrm{U}(p)}$ is of real type, $\R_{p,p+1} f_{\textrm{U}(p)}$ is of complex type, and $\R_{p,p+2} f_{\textrm{U}(p)}$ is of quaternionic type. Moreover, it is well known (see \cite{Lo}) that $\mathfrak{O}$ for these Clifford algebras takes the isomorphisms shown in Table \ref{table: isomorphisms}.
\begin{table}[ht]
    \centering
    \caption{The structure of $\mathfrak{D}$ for $\R_{p,p}$, $\R_{p, p+1}$, and $\R_{p, p+2}$}\label{table: isomorphisms}
    \begin{tabular}{|l|l|}
        \hline
        The Clifford algebra $\R_{p,q}$  & $\mathfrak{D}=f_{\textrm{U}(p)} \R_{p,q} f_{\textrm{U}(p)}$\\
        \hline
       $\R_{p,p}$& $\mathfrak{D}\cong \R$ \\
        \hline
        $\R_{p,p+1}$ & $\mathfrak{D}\cong \C$\\
        \hline
        $\R_{p,p+2}$ & $\mathfrak{D}\cong \mathbb{H}$ \\
        \hline
    \end{tabular} 
\end{table}
From Table \ref{table: isomorphisms}, it is clear that the induced minimal left ideals as right $\mathfrak{D}$-modules can be thought of as right  $\R, \C,$ and $\mathbb{H}$ modules respectively. Since $\R_{p,p}$ is a Clifford algebra of real type, the minimal left ideal  $\R_{p,p} f_{\textrm{U}(p)}$ will yield the real representation $\textrm{End}_{\mathfrak{D}}(\R_{p,p} f_{\textrm{U}(p)})\cong \R(2^{p}).$  The Clifford algebra $\R_{p,p+1}$, on the other hand,  is of complex type.  Moreover,  since the Clifford algebra $\R_{p,p+1}$  is an odd  Clifford algebra, the  pseudoscalar  $\Gamma=e_{1}\cdots e_{2p+1}$  is negative definite and central for this signature (see \cite{DL} for more information on the pseudoscalar). That is,  we can use $\Gamma$ as our complex structure that gives our minimal left ideal a complex vector space structure,  where $(a+bi) h f_{\textrm{U}(p)}=ah  f_{\textrm{U}(p)}+\Gamma h  f_{\textrm{U}(p)}. $ Hence, with the aid of $\Gamma$, we can define a complex basis on the minimal left ideal $\R_{p,p+1} f_{\textrm{U}(p)}$, where left multiplication by $\Gamma$ will coincide with right multiplication by $f_{\textrm{U}(p)} \Gamma f_{\textrm{U}(p)} $. This is immediate since for any $h f_{\textrm{U}(p)}\in \R_{p,p+1} f_{\textrm{U}(p)} $, we have 
\begin{align*}
  (h f_{\textrm{U}(p)}) (f_{\textrm{U}(p)}\Gamma f_{\textrm{U}(p)})  &= h (f_{\textrm{U}(p)})^2\Gamma f_{\textrm{U}(p)}  \\
   & = h f_{\textrm{U}(p)}\Gamma f_{\textrm{U}(p)}  \\
   & = h \Gamma ( f_{\textrm{U}(p)})^2  \\
  & =h \Gamma   f_{\textrm{U}(p)} \\
   & = \Gamma h  f_{\textrm{U}(p)}.\\
\end{align*}
It is immediate  that $\Gamma$, which will define our complex structure via the left multiplication endomorphism, will also define the right $\mathfrak{O}$-structure on
$\R_{p,p+1}  f_{\textrm{U}(p)}$; and with this choice of complex structure, we obtain matrix representations  $\textrm{End}_{\mathfrak{D}}(\R_{p,p+1}  f_{\textrm{U}(p)})\cong \C(2^{p}).$

Lastly, the Clifford algebra $\R_{p,p+2}$ is of quaternionic type, and we have a convenient choice for the quaternionic structure on the minimal left ideal $\R_{p,p+2}  f_{\textrm{U}(p)}$.  Notice that $e_{2p+1}, e_{2p+2},$ and $e_{2p+1} \cdot e_{2p+2}$ define a quaternion structure on $\R_{p,p+2}$ in the sense that the left action by $a+bi+cj+dk\in\mathbb{H}$ can be identified with a left action by $a+be_{2p+1}+ce_{2p+2}+de_{2p+1} \cdot e_{2p+2}$. All of these elements commute with the  primitive idempotent $ f_{\textrm{U}(p)}$, and hence are the elements in $\mathfrak{O}$ that define the isomorphism $\mathfrak{O}\cong \mathbb{H}$. This choice of quaternionic structure, giving us an $\mathbb{H}$ basis on  $\R_{p,p+2}  f_{\textrm{U}(p)}$ that also allows us to define a right $\mathfrak{O}$ module structure on $\R_{p,p+2}   f_{\textrm{U}(p)}$, will provide us with the isomorphism $\textrm{End}_{\mathfrak{D}}(\R_{p,p+2}  f_{\textrm{U}(p)})\cong \mathbb{H}(2^{p}).$ 

We conclude this section with three examples that relate to $\textrm{U}(3)$-structures.

\subsection{$\textrm{U}(3)$-structure and the induced minimal left ideal in the Clifford algebra $\R_{3,3}$}
We begin with  a $\textrm{U(3)}$-structure in  $\R^{6}$ given by a Kahler form in $\R^{6}$ that is described locally  by the equation $\omega_{0}=e^{14}+e^{25}+e^{36}$,  and a complex structure  $J_{0}\in\textrm{End}(\R^{6})$ completely  determined by the Kahler form $\omega_{0}$ and the standard inner product in $\R^{6}$ via the relation $\omega_{0}(\cdot \,,\cdot)=\langle J_{0}(\cdot),\cdot \rangle.$ 

The rational Kahler polynomial is then given by
\begin{align*}
P^{\mathbb{Q}}(\omega_{0}) & =\frac{1}{8}(1+\omega_{0}+\frac{\omega_{0}\wedge\omega_{0}}{2}+\frac{\omega_{0}^{3}}{6}) \\
& =\frac{1}{8}(1+e^{14}+e^{25}+e^{36}+e^{1425}+e^{1436}+e^{2536}+e^{142536})\in\bigwedge^{\bullet}(\R^{6})^{*}.
\end{align*}
From the extension of the quantization map $q^{*}:\bigwedge^{\bullet}(\R^{6})^{*}\rightarrow\R_{3,3}$, we send the rational Kahler polynomial into the expression  $q^{*}(P^{\mathbb{Q}}(\omega_{0}))=\dfrac{1}{8}(e_{14}+e_{25}+e_{36}+e_{1425}+e_{1436}+e_{2536}+e_{142536})$, which decomposes as the product to $(\frac{1+e_{14}}{2})  (\frac{1+e_{25}}{2}) (\frac{1+e_{36}}{2})$ in the Clifford algebra $\R_{3,3}$. By Theorem \ref{Minimal left ideals}, it is immediate that this is a primitive idempotent, which we denote $f_{\textrm{U}(3)}$, that gives us the minimal left ideal $\R_{3,3} f_{\textrm{U}(3)}$. 

This minimal left ideal is an $8$-dimensional real vector space given by the following basis elements: 
\begin{enumerate}
    \item  $f_{\textrm{U}(3)}= \frac{1}{8}(1 + e_{14} + e_{25} + e_{36} - e_{1245} - e_{1346} - e_{2356} - e_{123456})$
    \item $e_1 f_{\textrm{U}(3)} = \frac{1}{8}(-e_{12356} - e_{23456} + e_{125} + e_{136} - e_{245} - e_{346} + e_1 + e_4)$
    \item $e_2 f_{\textrm{U}(3)} = \frac{1}{8}(e_{12346} + e_{13456} - e_{124} + e_{145} + e_{236} - e_{356} + e_2 + e_5)$
    \item $e_3 f_{\textrm{U}(3)} = \frac{1}{8}(-e_{12345} - e_{12456} - e_{134} + e_{146} - e_{235} + e_{256} + e_3 + e_6)$
    \item $e_{12} f_{\textrm{U}(3)} = \frac{1}{8}(e_{1236} - e_{1356} + e_{2346} + e_{3456} + e_{12} + e_{15} - e_{24} + e_{45})$
    \item $e_{13} f_{\textrm{U}(3)} = \frac{1}{8}(-e_{1235} + e_{1256} - e_{2345} - e_{2456} + e_{13} + e_{16} - e_{34} + e_{46})$
    \item $e_{23} f_{\textrm{U}(3)} = \frac{1}{8}(e_{1234} - e_{1246} + e_{1345} + e_{1456} + e_{23} + e_{26} - e_{35} + e_{56})$
    \item $e_{123} f_{\textrm{U}(3)} = \frac{1}{8}(e_{123} + e_{126} - e_{135} + e_{156} + e_{234} - e_{246} + e_{345} + e_{456}).$
\end{enumerate}
Hence any spinor of the induced minimal left ideal $\psi\in \R_{3,3} f_{\textrm{U}(3)}$ is a linear combination of the $\R$ basis given above, and so we have
\[\R_{3,3} f_{\textrm{U}(3)}=\R\{f_{\textrm{U}(3)},e_1f_{\textrm{U}(3)},e_2f_{\textrm{U}(3)},e_{3}f_{\textrm{U}(3)},e_{12}f_{\textrm{U}(3)},e_{13}f_{\textrm{U}(3)},e_{23}f_{\textrm{U}(3)},e_{123}f_{\textrm{U}(3)}\}\cong \R^{8}.\] 

\subsection{Example for the Clifford algebra $\R_{3,4}$}
Just as we did with the previous example, we begin  with a  $\mathrm{U}(3)$-structure in $\R^{6}$ given by the Kahler form $\omega_{0}=e^{14}+e^{25}+e^{36}$.  From this  we have the same rational Kahler polynomial $P^{\mathbb{Q}}(\omega_{0})$ that induces  an embedded minimal left ideal in $\R_{3,4}$ associated to the primitive idempotent $f_{\mathrm{U}(3)}=\frac{1}{8}(1 + e_{14} + e_{25} + e_{36} - e_{1245} - e_{1346} - e_{2356} - e_{123456})$. 
The minimal left ideal  $\R_{3,4} f_{\mathrm{U(3)}}$, when viewed as a real vector space, is $16$-dimensional, and is generated by the following basis elements: 
\begin{enumerate}
    \item $f_{\mathrm{U}(3)}= \frac{1}{8}( - e_{123456} - e_{1245} - e_{1346} - e_{2356} + e_{14} + e_{25} + e_{36} + 1 )$
    \item $e_1f_{\mathrm{U}(3)} = \frac{1}{8}(- e_{12356} - e_{23456} + e_{125} + e_{136} - e_{245} - e_{346} + e_1 + e_4)$
    \item $e_2f_{\mathrm{U}(3)}= \frac{1}{8}(e_{12346} + e_{13456} - e_{124} + e_{145} + e_{236} - e_{356} + e_2 + e_5)$
    \item $e_3f_{\mathrm{U}(3)} = \frac{1}{8}(- e_{12345} - e_{12456} - e_{134} + e_{146} - e_{235} + e_{256} + e_3 + e_6)$
    \item $e_7f_{\mathrm{U}(3)} = \frac{1}{8}(- e_{1234567} - e_{12457} - e_{13467} - e_{23567} + e_{147} + e_{257} + e_{367} + e_7)$
    \item $e_{12}f_{\mathrm{U}(3)} = \frac{1}{8}(e_{1236} - e_{1356} + e_{2346} + e_{3456} + e_{12} + e_{15} - e_{24} + e_{45})$
    \item $e_{13}f_{\mathrm{U}(3)} = \frac{1}{8}(- e_{1235} + e_{1256} - e_{2345} - e_{2456} + e_{13} + e_{16} - e_{34} + e_{46})$
    \item $e_{17}f_{\mathrm{U}(3)} = \frac{1}{8}(- e_{123567} - e_{234567} + e_{1257} + e_{1367} - e_{2457} - e_{3467} + e_{17} + e_{47})$
    \item $e_{23}f_{\mathrm{U}(3)} = \frac{1}{8}(e_{1234} - e_{1246} + e_{1345} + e_{1456} + e_{23} + e_{26} - e_{35} + e_{56})$
    \item $e_{27}f_{\mathrm{U}(3)} = \frac{1}{8}(e_{123467} + e_{134567} - e_{1247} + e_{1457} + e_{2367} - e_{3567} + e_{27} + e_{57})$
    \item $e_{37}f_{\mathrm{U}(3)} = \frac{1}{8}(- e_{123457} - e_{124567} - e_{1347} + e_{1467} - e_{2357} + e_{2567} + e_{37} + e_{67})$
    \item $e_{123}f_{\mathrm{U}(3)} = \frac{1}{8}(e_{123} + e_{126} - e_{135} + e_{156} + e_{234} - e_{246} + e_{345} + e_{456})$
    \item $e_{127}f_{\mathrm{U}(3)} = \frac{1}{8}(e_{12367} - e_{13567} + e_{23467} + e_{34567} + e_{127} + e_{157} - e_{247} + e_{457})$
    \item $e_{137}f_{\mathrm{U}(3)} = \frac{1}{8}(- e_{12357} + e_{12567} - e_{23457} - e_{24567} + e_{137} + e_{167} - e_{347} + e_{467})$
    \item $e_{237}f_{\mathrm{U}(3)} = \frac{1}{8}(e_{12347} - e_{12467} + e_{13457} + e_{14567} + e_{237} + e_{267} - e_{357} + e_{567})$
    \item $e_{1237}f_{\mathrm{U}(3)} = \frac{1}{8}(e_{1237} + e_{1267} - e_{1357} + e_{1567} + e_{2347} - e_{2467} + e_{3457} + e_{4567})$
\end{enumerate}
That is, we have that as a real space, 
\begin{align*}
\R_{3,4} f_{\textrm{U}(3)} = \R & \{f_{\textrm{U}(3)},e_{1}f_{\textrm{U}(3)},e_2f_{\textrm{U}(3)},e_{3}f_{\textrm{U}(3)},e_{7}f_{\textrm{U}(3)},e_{12}f_{\textrm{U}(3)},e_{13}f_{\textrm{U}(3)},e_{17}f_{\textrm{U}(3)},e_{23}f_{\textrm{U}(3)}, \\
& e_{27}f_{\textrm{U}(3)},e_{37}f_{\textrm{U}(3)},e_{123}f_{\textrm{U}(3)},e_{127}f_{\textrm{U}(3)},e_{137}f_{\textrm{U}(3)},e_{237}f_{\textrm{U}(3)},e_{1237}f_{\textrm{U}(3)}\}\cong \R^{16}.
\end{align*}
The Clifford algebra $\R_{3,4}$ is of complex type. This is easily seen by the fact that when we consider $\R_{3,4}$ as a matrix algebra, we have the matrix algebra isomorphism $\R_{3,4}\cong \C(8)$. Thus the space of spinors must be isomorphic to $\C^{8}$ as a complex vector space, which implies that our minimal left ideal must have a complex structure. For the  Clifford algebra $\R_{3,4}$, a complex structure that can be chosen is the pseudoscalar (or volume element, as we refer to it) $\Gamma=e_{1234567}\in\R_{3,4}$. This pseudoscalar is central in the Clifford algebra, and also has the property of being negative definite; that is, $\Gamma^2=-1$, making $\Gamma$ a suitable choice for a complex structure on our minimal left ideal.
For this example, we make a sign change and identify the negative of the psuedoscalar as the complex structure. That is, we identify $(-\Gamma)$ with $i\in \C$, in order to preserve the orientation of our basis. With the aid of our complex structure, we define  complex multiplication by a complex scalar $a+bi\in\C$ on   our minimal left ideal as $(a+bi)\psi=a\psi+b(-\Gamma)\psi$, for any $\psi\in\R_{3,4} f_{\textrm{U}(3)}$.
With complex multiplication defined in this manner, we make $\R_{3,4} f_{\textrm{U}(3)}$ into a left $\C$-module, where we view the complex numbers as the  subalgebra $\R\{1,-\Gamma\}\cong \C$. Since $-\Gamma$ commutes with the primitive idempotent $f_{\textrm{U(3)}}$, we have $(f_{\textrm{U}(3)} (-\Gamma) f_{\textrm{U}(3)})^2=\Gamma^2f_{\textrm{U}(3)}^2=-f_{\textrm{U}(3)}$. Then we can identify the complex scalar $i$ with the term  $f_{\textrm{U}(3)} (-\Gamma) f_{\textrm{U}(3)}$, when we view  $\R_{3,4} f_{\textrm{U}(3)}$ as a right $\mathfrak{O}$-module isomorphic to $\C$.
It then follows that for the minimal left ideal $\R_{3,4} f_{\textrm{U}(3)}$, the negative pseudoscalar defines both left and right multiplication by $i$, and hence it is also used to define $\R_{3,4} f_{\textrm{U}(3)}$ as a complex vector space, where 
we define the following basis elements in terms of the complex generator $-
\Gamma$.
\begin{enumerate}
    \item $-\Gamma f_{\textrm{U}(3)}=e_{7} f_{\textrm{U}(3)}$,
    \item $e_{1}(-\Gamma) f_{\textrm{U}(3)}=e_{17} f_{\textrm{U}(3)}$,
    \item $e_{2}(-\Gamma )f_{\textrm{U}(3)}=e_{27} f_{\textrm{U}(3)}$,
\item $e_{3}(-\Gamma )f_{\textrm{U}(3)}=e_{37} f_{\textrm{U}(3)}$,
\item $e_{12}(-\Gamma )f_{\textrm{U}(3)}=e_{127} f_{\textrm{U}(3)}$,
\item $e_{13}(-\Gamma )f_{\textrm{U}(3)}=e_{137} f_{\textrm{U}(3)}$,
\item $e_{23}(-\Gamma )f_{\textrm{U}(3)}=e_{237} f_{\textrm{U}(3)}$,
\item $e_{123}(-\Gamma )f_{\textrm{U}(3)}=e_{1237} f_{\textrm{U}(3)}$.
\end{enumerate}

With this identification, it is easy to see that we can break up 
$\R_{3,4} f_{\textrm{U}(3)}$ into the direct sum $\R_{3,3} f_{\textrm{U}(3)}\oplus (-\Gamma)\R_{3,3}  f_{\textrm{U}(3)}$, where we define  $\R_{3,3} f_{\textrm{U}(3)}$ as 
\[
\R_{3,3} f_{\textrm{U}(3)}=\R\{f_{\textrm{U}(3)},e_1f_{\textrm{U}(3)},e_2f_{\textrm{U}(3)},e_{3}f_{\textrm{U}(3)},e_{12}f_{\textrm{U}(3)},e_{13}f_{\textrm{U}(3)},e_{23}f_{\textrm{U}(3)},e_{123}f_{\textrm{U}(3)}\}.
\]
With this identification, we can view our minimal left ideal as an $8$-dimensional complex vector space given by 
\[
\R_{3,4} f_{\textrm{U}(3)}=\C\{f_{\textrm{U}(3)},e_{1}f_{\textrm{U}(3)},e_{2}f_{\textrm{U}(3)},e_{3}f_{\textrm{U}(3)},e_{12}f_{\textrm{U}(3)},e_{13}f_{\textrm{U}(3)},e_{23}f_{\textrm{U}(3)},e_{123}f_{\textrm{U}(3)}\}\cong \C^{8}.
\]

\subsection{Example for the Clifford algebra $\R_{3,5}$}

For the Clifford algebra $\R_{3,5}$, which is of quaternionic type since we have the matrix algebra isomorphism  $\R_{3,5}\cong \mathbb{H}(8)$, we must define a quaternionic structure where the generators that define a right $\mathbb{H}$ basis on the minimal left ideal also establish an isomorphism  $\mathfrak{O}\cong \mathbb{H}$. As in the previous two examples,  we have that $f_{\textrm{U}(3)}$ is a primitive idempotent, this time for the Clifford algebra $\R_{3,5}$.
Now the minimal left ideal is of real dimension $32$; that is, $\R_{3,5} f_{\textrm{U}(3)}\cong\R^{32}$, with the following $\R$ basis:
\begin{enumerate}
    \item $ f_{\textrm{U}(3)} = \frac{1}{8}( - e_{123456} - e_{1245} - e_{1346} - e_{2356} + e_{14} + e_{25} + e_{36} + 1 )$
    \item $e_1 f_{\textrm{U}(3)} = \frac{1}{8}( - e_{12356} - e_{23456} + e_{125} + e_{136} - e_{245} - e_{346} + e_1 + e_4)$
    \item $e_2 f_{\textrm{U}(3)} = \frac{1}{8}(e_{12346} + e_{13456} - e_{124} + e_{145} + e_{236} - e_{356} + e_2 + e_5)$
    \item $e_3 f_{\textrm{U}(3)} = \frac{1}{8}(- e_{12345} - e_{12456} - e_{134} + e_{146} - e_{235} + e_{256} + e_3 + e_6)$
    \item $e_{12} f_{\textrm{U}(3)} = \frac{1}{8}(e_{1236} - e_{1356} + e_{2346} + e_{3456} + e_{12} + e_{15} - e_{24} + e_{45})$
    \item $e_{13} f_{\textrm{U}(3)} = \frac{1}{8}(- e_{1235} + e_{1256} - e_{2345} - e_{2456} + e_{13} + e_{16} - e_{34} + e_{46})$
    \item $e_{23} f_{\textrm{U}(3)} = \frac{1}{8}(e_{1234} - e_{1246} + e_{1345} + e_{1456} + e_{23} + e_{26} - e_{35} + e_{56})$
    \item $e_{123} f_{\textrm{U}(3)} = \frac{1}{8}(e_{123} + e_{126} - e_{135} + e_{156} + e_{234} - e_{246} + e_{345} + e_{456})$
    \item $e_7 f_{\textrm{U}(3)} = \frac{1}{8}(- e_{1234567} - e_{12457} - e_{13467} - e_{23567} + e_{147} + e_{257} + e_{367} + e_7)$
    \item $e_{17} f_{\textrm{U}(3)}= \frac{1}{8}(- e_{123567} - e_{234567} + e_{1257} + e_{1367} - e_{2457} - e_{3467} + e_{17} + e_{47})$
    \item $e_{27} f_{\textrm{U}(3)} = \frac{1}{8}(e_{123467} + e_{134567} - e_{1247} + e_{1457} + e_{2367} - e_{3567} + e_{27} + e_{57})$
    \item $e_{37} f_{\textrm{U}(3)} = \frac{1}{8}(- e_{123457} - e_{124567} - e_{1347} + e_{1467} - e_{2357} + e_{2567} + e_{37} + e_{67})$
    \item $e_{127} f_{\textrm{U}(3)} = \frac{1}{8}(e_{12367} - e_{13567} + e_{23467} + e_{34567} + e_{127} + e_{157} - e_{247} + e_{457})$
    \item $e_{137} f_{\textrm{U}(3)} = \frac{1}{8}(- e_{12357} + e_{12567} - e_{23457} - e_{24567} + e_{137} + e_{167} - e_{347} + e_{467})$
    \item $e_{237} f_{\textrm{U}(3)}= \frac{1}{8}(e_{12347} - e_{12467} + e_{13457} + e_{14567} + e_{237} + e_{267} - e_{357} + e_{567})$
    \item $e_{1237} f_{\textrm{U}(3)} = \frac{1}{8}(e_{1237} + e_{1267} - e_{1357} + e_{1567} + e_{2347} - e_{2467} + e_{3457} + e_{4567})$
    \item $e_8 f_{\textrm{U}(3)} = \frac{1}{8}(- e_{1234568} - e_{12458} - e_{13468} - e_{23568} + e_{148} + e_{258} + e_{368} + e_8)$
    \item $e_{18} f_{\textrm{U}(3)} = \frac{1}{8}(- e_{123568} - e_{234568} + e_{1258} + e_{1368} - e_{2458} - e_{3468} + e_{18} + e_{48})$
    \item $e_{28} f_{\textrm{U}(3)} = \frac{1}{8}(e_{123468} + e_{134568} - e_{1248} + e_{1458} + e_{2368} - e_{3568} + e_{28} + e_{58})$
    \item $e_{38} f_{\textrm{U}(3)} = \frac{1}{8}(- e_{123458} - e_{124568} - e_{1348} + e_{1468} - e_{2358} + e_{2568} + e_{38} + e_{68})$
    \item $e_{128} f_{\textrm{U}(3)} = \frac{1}{8}(e_{12368} - e_{13568} + e_{23468} + e_{34568} + e_{128} + e_{158} - e_{248} + e_{458})$
    \item $e_{138} f_{\textrm{U}(3)} = \frac{1}{8}(- e_{12358} + e_{12568} - e_{23458} - e_{24568} + e_{138} + e_{168} - e_{348} + e_{468})$
    \item $e_{238} f_{\textrm{U}(3)} = \frac{1}{8}(e_{12348} - e_{12468} + e_{13458} + e_{14568} + e_{238} + e_{268} - e_{358} + e_{568})$
    \item $e_{1238} f_{\textrm{U}(3)} = \frac{1}{8}(e_{1238} + e_{1268} - e_{1358} + e_{1568} + e_{2348} - e_{2468} + e_{3458} + e_{4568})$
    \item $e_{78} f_{\textrm{U}(3)} = \frac{1}{8}(- e_{12345678} - e_{124578} - e_{134678} - e_{235678} + e_{1478} + e_{2578} + e_{3678} + e_{78})$
    \item $e_{178} f_{\textrm{U}(3)} = \frac{1}{8}(- e_{1235678} - e_{2345678} + e_{12578} + e_{13678} - e_{24578} - e_{34678} + e_{178} + e_{478})$
    \item $e_{278} f_{\textrm{U}(3)} = \frac{1}{8}(e_{1234678} + e_{1345678} - e_{12478} + e_{14578} + e_{23678} - e_{35678} + e_{278} + e_{578})$
    \item $e_{378} f_{\textrm{U}(3)} = \frac{1}{8}(- e_{1234578} - e_{1245678} - e_{13478} + e_{14678} - e_{23578} + e_{25678} + e_{378} + e_{678})$
    \item $e_{1278} f_{\textrm{U}(3)}= \frac{1}{8}(e_{123678} - e_{135678} + e_{234678} + e_{345678} + e_{1278} + e_{1578} - e_{2478} + e_{4578})$
    \item $e_{1378} f_{\textrm{U}(3)}= \frac{1}{8}(- e_{123578} + e_{125678} - e_{234578} - e_{245678} + e_{1378} + e_{1678} - e_{3478} + e_{4678})$
    \item $e_{2378} f_{\textrm{U}(3)} = \frac{1}{8}(e_{123478} - e_{124678} + e_{134578} + e_{145678} + e_{2378} + e_{2678} - e_{3578} + e_{5678})$
    \item $e_{12378} f_{\textrm{U}(3)} = \frac{1}{8}(e_{12378} + e_{12678} - e_{13578} + e_{15678} + e_{23478} - e_{24678} + e_{34578} + e_{45678})$
\end{enumerate}
As we have stated, the Clifford algebra $\R_{3,5}$ is of quaternionic type, and hence has a quaternionic subalgebra, where the left actions of this subalgebra can be identified with left actions by $\mathbb{H}$ on our minimal left ideal $\R_{3,5} f_{\textrm{U}(3)}$.  The Clifford algebra $\R_{3,5}$ has a subalgebra isomorphic to $\mathbb{H}$ established by the generators $1,e_{7},e_{8}$ and $e_{78}$, where we set $e_{7}\cong i, e_{8}\cong j$, and $e_{78}\cong k$. Thus we have $\R\{1,e_{7},e_{8},e_{78}\}\cong \mathbb{H}$.

These generators also have the property that they commute with the primitive idempotent induced from our $\textrm{U}(3)$-structure. That is, $e_{7}f_{\textrm{U}(3)}=f_{\textrm{U}(3)}e_{7}$, $e_{8}f_{\textrm{U}(3)}=f_{\textrm{U}(3)}e_{8}$, and $e_{78}f_{\textrm{U}(3)}=f_{\textrm{U}(3)}e_{78}.$

Moreover, for all three generators $e_{7}, e_{8},$ and $e_{78}$, we have in $\mathfrak{O}=f_{\textrm{U}(3)} \R_{3,5} f_{\textrm{U}(3)}$ the property $(f_{\textrm{U}(3)}e_{7}f_{\textrm{U}(3)})^2=(f_{\textrm{U}(3)}e_{8}f_{\textrm{U}(3)})^2=(f_{\textrm{U}(3)}e_{78}f_{\textrm{U}(3)})^2=-f_{\textrm{U}(3)}$, as well as the following relations:
\begin{enumerate}[label=(\roman*)]
    \item $f_{\textrm{U}(3)}fe_{7}f_{\textrm{U}(3)}\cdot f_{\textrm{U}(3)}e_{8}f_{\textrm{U}(3)}=f_{\textrm{U}(3)}e_{78}f_{\textrm{U}(3)}$
    \item $f_{\textrm{U}(3)}e_{8}f_{\textrm{U}(3)}\cdot f_{\textrm{U}(3)}e_{78}f_{\textrm{U}(3)}=f_{\textrm{U}(3)}e_{7}f_{\textrm{U}(3)}$
  \item $f_{\textrm{U}(3)}e_{78}f_{\textrm{U}(3)}\cdot f_{\textrm{U}(3)}e_{7}f_{\textrm{U}(3)}=f_{\textrm{U}(3)}e_{8}f_{\textrm{U}(3)}$
  \item $f_{\textrm{U}(3)}e_{8}f_{\textrm{U}(3)}\cdot f_{\textrm{U}(3)}e_{7}f_{\textrm{U}(3)}=-f_{\textrm{U}(3)}e_{78}f_{\textrm{U}(3)}$
    \item $f_{\textrm{U}(3)}e_{78}f_{\textrm{U}(3)}\cdot f_{\textrm{U}(3)}e_{8}f_{\textrm{U}(3)}=-f_{\textrm{U}(3)}e_{7}f_{\textrm{U}(3)}$
  \item $f_{\textrm{U}(3)}e_{7}f_{\textrm{U}(3)}\cdot f_{\textrm{U}(3)}e_{78}f_{\textrm{U}(3)}=-f_{\textrm{U}(3)}e_{8}f_{\textrm{U}(3)}$
    \end{enumerate}

Under the identification $f_{\textrm{U}(3)}e_{7}f_{\textrm{U}(3)}\cong i$,  $f_{\textrm{U}(3)}e_{8}f_{\textrm{U}(3)}\cong j$, and $f_{\textrm{U}(3)}e_{78}f_{\textrm{U}(3)}\cong k$, we can establish that $\mathfrak{O}=f_{\textrm{U}(3)}\R_{3,5}f_{\textrm{U}(3)}\cong \mathbb{H}$, making  the minimal left ideal $\R_{3,5}f_{\textrm{U}(3)}$ into a right $\mathbb{H}$ module.
To see how right multiplication by the generators of $\mathbb{H}$  acts on $\R_{3,5}f_{\textrm{U}(3)}$, we have the following, for an arbitrary element  $g f_{\textrm{U}(3)} \in\R_{3,5}f_{\textrm{U}(3)}$: 
\begin{enumerate}
    \item $(g f_{\textrm{U}(3)}) \cdot i\mapsto (g f_{\textrm{U}(3)}) (f_{\textrm{U}(3)}e_{7}f_{\textrm{U}(3)})=gf_{\textrm{U}(3)}^{2}e_{7} f_{\textrm{U}(3)}=gf_{\textrm{U}(3)}e_{7}f_{\textrm{U}(3)}=g e_{7} f_{\textrm{U}(3)}^2=g e_{7}f_{\textrm{U}(3)}.$
   \item $(g f_{\textrm{U}(3)}) \cdot j\mapsto (g f_{\textrm{U}(3)}) (f_{\textrm{U}(3)}e_{8}f_{\textrm{U}(3)})=gf_{\textrm{U}(3)}^{2}e_{8} f_{\textrm{U}(3)}=gf_{\textrm{U}(3)}e_{8}f_{\textrm{U}(3)}=g e_{8} f_{\textrm{U}(3)}^2=g e_{8}f_{\textrm{U}(3)}.$
   \item $(g f_{\textrm{U}(3)}) \cdot k\mapsto (g f_{\textrm{U}(3)}) (f_{\textrm{U}(3)}e_{78}f_{\textrm{U}(3)})=gf_{\textrm{U}(3)}^{2}e_{78} f_{\textrm{U}(3)}=gf_{\textrm{U}(3)}e_{78}f_{\textrm{U}(3)}=g e_{78} f_{\textrm{U}(3)}^2=g e_{78}f_{\textrm{U}(3)}.$
\end{enumerate}
Hence the generators $e_{7},e_{8}, $ and $e_{78}$  define  right and left multiplication by $\mathbb{H}$, as well as the right $\mathbb{H}$ basis for the minimal left ideal $\R_{3,5}f_{\textrm{U}(3)}$ when viewed as a right $\mathbb{H}$ module. With this identification, it is easy to see that we can break up $\R_{3,5} f_{\textrm{U}(3)}$ into the direct sum 
\begin{align*}
\R_{3,5} f_{\textrm{U}(3)} & =\R_{3,3} f_{\textrm{U}(3)}\oplus \R_{3,3}  f_{\textrm{U}(3)} ( f_{\textrm{U}(3)} e_{7}  f_{\textrm{U}(3)}) \\
& \quad \quad \oplus \R_{3,3}  f_{\textrm{U}(3)} ( f_{\textrm{U}(3)} e_{8}  f_{\textrm{U}(3)}) 
\oplus \R_{3,3}  f_{\textrm{U}(3)} ( f_{\textrm{U}(3)} e_{78}  f_{\textrm{U}(3)}) \\
& \cong \R_{3,3} f_{\textrm{U}(3)}\oplus \R_{3,3}  f_{\textrm{U}(3)} i\oplus \R_{3,3} j \oplus \R_{3,3}  f_{\textrm{U}(3)} k,
\end{align*}
where $\R_{3,3} f_{\textrm{U}(3)}=\R\{f_{\textrm{U}(3)},e_1f_{\textrm{U}(3)},e_2f_{\textrm{U}(3)},e_{3}f_{\textrm{U}(3)},e_{12}f_{\textrm{U}(3)},e_{13}f_{\textrm{U}(3)},e_{23}f_{\textrm{U}(3)},e_{123}f_{\textrm{U}(3)}\} $. 

Our minimal left ideal $\R_{3,5} f_{\textrm{U}(3)}$ also has the structure of a left Clifford module, as all minimal left ideals do,  where multiplication by $\mathbb{H}$ on the left is identified with the quaternionic subalgebra $\R\{1,e_{7},e_{8},e_{78}\}\subset \R_{3,5}$. Our minimal left ideal being a left and right $\mathbb{H}$-module gives us  
\begin{align*}
\R_{3,5}f_{\textrm{U}(3)} =\mathbb{H}\{f_{\textrm{U}(3)},e_{1}f_{\textrm{U}(3)},e_{2}f_{\textrm{U}(3)},e_{3}f_{\textrm{U}(3)},e_{12}f_{\textrm{U}(3)},e_{13}f_{\textrm{U}(3)},e_{23}f_{\textrm{U}(3)},e_{123}f_{\textrm{U}(3)}\} \cong \mathbb{H}^{8},
\end{align*}
but unlike in $\R_{3,4}f_{\textrm{U}(3)}$, the quaternion elements do not commute with  basis elements of the minimal left ideal.
\subsubsection{Conclusion about $\textrm{U}(3)$-structures and their associated minimal left ideals}
As we saw above, the $\textrm{U}(3)$-structure in $\R^{6}$ defined as the stabilizer of the Kahler form $\omega_{0}=e^{12}+e^{34}+e^{56}$ and the standard inner product $g_{0}=(e^{1})^2+\cdots + (e^{6})^2$ have an induced minimal left ideal $\R_{3,3} f_{\textrm{U}(3)}$, where the primitive idempotent  is given by $q^{*}(P^{\mathbb{Q}}(\omega_{0}))=f_{\textrm{U}(3)}$. 
One can then view the complex structure and the quaternionic structures in the minimal left ideals  
$\R_{3,4} f_{\textrm{U}(3)}$ and $\R_{3,5} f_{\textrm{U}(3)}$ as direct sums of copies of the minimal left ideal  $\R_{3,3} f_{\textrm{U}(3)}$, showing that the induced embedded minimal left ideals for the Clifford algebras $\R_{3,4}$ and $\R_{3,5}$ are all induced and well understood from the same $\textrm{U}(3)$-structure in $\R^{6}.$

\section{$\textrm{U}(n)$-structures recovered from Clifford algebras of certain signatures}

In this section, we start with any Clifford algebra $\R_{p,q}$ and ask, for which signatures can we recover a $\textrm{U}(m)$-structure, and for which signatures can we recover a $\textrm{U}(m)$-structure by projection? For the $\textrm{U}(m)$-structures recoverable by projection, we then ask how the image of the primitive idempotent looks in the larger exterior algebra $\bigwedge^{\bullet}(\R^{p+q})^{*}$. We begin with an immediate corollary to Theorem \ref{Kahler}, in which we identify the recovery of a $\textrm{U}(p)$-structure on $\R^{2p}$, directly or from projection, from Clifford algebras of signatures $(p,p)$, $(p,p+1),$ and $(p,p+2)$.

\begin{cor}\label{Kahler Corrollary}
For  real Clifford algebras $\R_{p,p}$,  we can always recover a $\textrm{U}(p)$-structure  on $\R^{2p}$  from  minimal left ideals  $\R_{p,p}f$; and for the real Clifford algebras  $\R_{p,p+1}$ and $\R_{p,p+2}$, we can recover a $\textrm{U}(p)$-structure  on $\R^{2p}$ by projection, where the rational Kahler polynomial on $\R^{2p}$ is given by $\sigma^{*}(f)=P^{\mathbb{Q}}(\omega_{0})$, and  the Kahler form is given by the expression $\sigma^{*}(\langle2^{m} f\rangle_{2})=\omega_{0}$.
    
\end{cor}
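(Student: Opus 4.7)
The plan is to deduce the corollary directly from Theorem \ref{Kahler} by inverting the quantization map. Theorem \ref{Kahler} establishes that the $\textrm{U}(p)$-structure $(J_0, \omega_0, g_0)$ determines a primitive idempotent $f = f_{\textrm{U}(p)} = q^{*}(P^{\mathbb{Q}}(\omega_0))$ in each of $\R_{p,p}$, $\R_{p,p+1}$, $\R_{p,p+2}$. Because the quantization map and the symbol map are mutually inverse linear isomorphisms (between $\bigwedge^{\bullet}(\R^{2p})^{*}$ and $\R_{p,p}$, respectively between $\bigwedge^{\bullet}(\R^{2p})^{*}$ and its image subalgebra in $\R_{p,p+1}$ or $\R_{p,p+2}$) and moreover preserve the natural grading (exterior $k$-forms correspond to grade-$k$ canonical basis elements), applying $\sigma^{*}$ to both sides of $f = q^{*}(P^{\mathbb{Q}}(\omega_0))$ yields the first identity $\sigma^{*}(f) = P^{\mathbb{Q}}(\omega_0)$ at once.

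For the $(p,p)$ case the underlying real dimensions match and $\sigma^{*}: \R_{p,p} \xrightarrow{\cong} \bigwedge^{\bullet}(\R^{2p})^{*}$ is a genuine linear isomorphism, so the $\textrm{U}(p)$-structure is recovered in the original sense. For signatures $(p,p+1)$ and $(p,p+2)$ one first identifies the Clifford subalgebra $A = q^{*}\!\left(\bigwedge^{\bullet}(\R^{2p})^{*}\right) \subset \R_{p,p+s}$, generated by $e_1,\ldots,e_{2p}$ and isomorphic to $\R_{p,p}$; the restriction $\sigma^{*}\big|_{A}: A \xrightarrow{\cong} \bigwedge^{\bullet}(\R^{2p})^{*}$ then fulfills Definition \ref{definition: recovered by projection} with $r+s=2p=2m$, where $m=p=\min\{p,p+s\}$. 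Thus the $\textrm{U}(p)$-structure is recovered by projection in these two cases.

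To obtain the Kahler form from its grade-$2$ component, recall
\[
P^{\mathbb{Q}}(\omega_0) = \frac{1}{2^{p}}\left(1 + \omega_0 + \frac{\omega_0^{2}}{2!} + \cdots + \frac{\omega_0^{p}}{p!}\right),
\]
so the only term of pure exterior degree $2$ is $\frac{1}{2^{p}}\omega_0$. Since $\sigma^{*}$ is grade-preserving and linear, $\sigma^{*}(\langle f \rangle_{2}) = \frac{1}{2^{p}}\omega_0$, from which the stated formula $\sigma^{*}(\langle 2^{m} f \rangle_{2}) = \omega_0$ (with $m=p$) follows at once. The remaining tensors $J_0$ and $g_0$ are then reconstructed via $\omega_0(\cdot,\cdot) = g_0(J_0\cdot,\cdot)$ together with $g_0$-orthogonality of $J_0$. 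The only step requiring real care is identifying the subalgebra $A \cong \R_{p,p}$ inside $\R_{p,p+1}$ and $\R_{p,p+2}$ and verifying that $\sigma^{*}\big|_{A}$ is genuinely the inverse of the embedded extended quantization map appearing in Definition \ref{definition: induced embedded}; once this matching is checked, everything else is a direct consequence of Theorem \ref{Kahler} and the grade-preservation of $\sigma^{*}$.
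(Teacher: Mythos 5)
Your proposal is correct and follows essentially the same route as the paper: both apply the symbol map (the inverse of the quantization map) to the identity $f_{\mathrm{U}(p)}=q^{*}(P^{\mathbb{Q}}(\omega_{0}))$ from Theorem~\ref{Kahler}, isolate the degree-$2$ component to recover $\omega_{0}$, and for signatures $(p,p+1)$ and $(p,p+2)$ pass to the subalgebra $A=\R\langle e_{1},\ldots,e_{2p}\rangle\cong\R_{p,p}$ so that $\sigma^{*}\big|_{A}$ realizes the projection of Definition~\ref{definition: recovered by projection}. The only cosmetic difference is that the paper first rescales to $f_{p}=2^{p}f$ and writes $\sigma^{*}(f_{p})=P(\omega_{0})$, whereas you keep $f$ and extract $\tfrac{1}{2^{p}}\omega_{0}$ from the grade-$2$ part; the content is the same.
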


\begin{proof}

For the real Clifford algebras $\R_{p,p}$, $\R_{p,p+1},$ and $\R_{p,p+2}$ such that  $p\geq 1$, the primitive idempotents $f$ that generate the minimal left ideals are generated by $p$ commuting involutions. From Theorem \ref{Kahler}, we have that the minimal left ideal for each of these Clifford algebras is given by $f_{\textrm{U}(p)}:=q^{*}(P^{\mathbb{Q}}(\omega_{0}))=\prod_{i=1}^{p}\big(\dfrac{1+e_{i} e_{i+p}}{2}\big )$, where $\omega_{0}$ is the Kahler form on $\R^{2p}$ induced by the standard Euclidean inner product and the complex structure given by $J(e_{k})=e_{k+p}$ and $J(e_{k+p})=-e_{p}$ for $k=1,2,\ldots,p$. Hence if we define $f_p:=2^{p} f_{\textrm{U}(p)}:=2^{p}q^{*}(P^{\mathbb{Q}}(\omega_{0}))$, then the image of $f_{p}$ under the symbol map $\sigma^{*}:\R_{p,p}\rightarrow \bigwedge^{\bullet}(\R^{2p})^{*}$ is the Kahler polynomial $P(\omega_{0})$; that is,   $P(\omega_{0})=\sigma^{*}(f_{p})$.  From this, if we restrict $f_{p}$ to its degree $2$ components, we recover the Kahler form in $\R^{2p}$; that is, $\sigma^{*}(\langle f_{p}\rangle_{2})=\omega_{0}$. Thus, for the Clifford algebra $\R_{p,p}$, we are able to recover a $\textrm{U}(p)$-structure on $\R^{2p}$. For the Clifford algebras $\R_{p,p+1}$ and $\R_{p,p+2}$, we define the subalgebra generated by the standard basis vectors in $\R^{2p}$ as $A=\R\langle e_1,\ldots,e_{2p}\rangle$. It then follows that 
 $A\cong \R_{p,p}$. The image of this algebra under the extended symbol map  is isomorphic to $\bigwedge^{\bullet}(\R^{2p})^{*}$. Hence, for the Clifford algebras $\R_{p,p+1}$ and $\R_{p,p+2}$, we have $\textrm{U}(p)$-structures recovered by projection on $\R^{2p}$ via $\sigma^{*}\big|_{A}(\langle f_{p}\rangle_{2})=\tilde{\omega}_{0}$, where we define $\tilde{\omega}_{0}$ as the Kahler form in the underlying vector space that generates $\sigma^{*}(A)$ isomorphic to $\R^{2p}$. The Kahler form $\tilde{\omega}_{0}$ is isomorphic to $\omega_{0}$ in $\R^{2p}$ when the underlying vector spaces are identified. Since $f_{p}$ is an algebraic expression of $f_{\textrm{U}(p)}$ in the Clifford algebras $\R_{p,p+1}$ and $\R_{p,p+2}$, we have that from minimal left ideals on the Clifford algebras $\R_{p,p+1}$ and $\R_{p,p+2}$, we are able to recover by projection a $\textrm{U}(p)$-structure on $\R^{2p}$.

\end{proof}
\begin{remark}
In the proof of Corollary \ref{Kahler Corrollary}, we do not make any mention of recovering the almost complex structure $J_{0}$ on $\R^{2p}$ needed to have a $\textrm{U}(p)$-structure. The reason is that $\R^{2p}$ has the standard Euclidean inner product, and since we can recover the Kahler form $\omega_{0}$ from $f_{\textrm{U}(p)}$, it then follows that we can easily recover $J_{0}$ on $\R^{2p}$. This is because in a $\textrm{U}(p)$-structure, any two parallel tensors define the third. In fact, given that $\sigma^{*}(\langle f \rangle_2)=\omega_{0}=e^{1}\wedge e^{p+1}+\cdots+e^{p}\wedge e^{2p}$, it is clear that the recovered complex structure $J_{0}$ is defined by $J(e_{k})=e_{k+m}$ and $J(e_{k+m})=-e_{m}$ for $k=1,2,\ldots,p$. 
\end{remark}

We now turn our attention to Clifford algebras of signature $(p,q)$ in general, for $p \neq q$ and $p, q \neq 0$. In the general case, we do not recover a $\textrm{U}(m)$-structure projectively from the primitive idempotent that defines the minimal left ideal, as was the case for $\R_{p,p}$, $\R_{p,p+1}$, and $\R_{p,p+2}$. Instead, the $\textrm{U(p)}$-structure is projectively recovered from a factor of the primitive idempotent that defines a minimal left ideal for Clifford algebras of signature $(p,q)$.

\begin{prop}\label{Recovery theorem}
For real Clifford algebras  of signature $(p,q)$ such that $p \neq q$, $p, q \neq 0$, and $q \not\in \{p+1, p+2\}$, there exists a  factor $\tilde{f}$ of a primitive idempotent $h$ that  recovers by projection a $\textrm{U}(m)$ on  $\R^{2m}$, where $m= \textrm{min}\{p,q\}$. Moreover, the image of the primitive idempotent $h=\tilde{f} e$ in  $\bigwedge^{\bullet}(\R^{p+q})^{*}$ via the extended symbol map splits as the product  $\sigma^{*}(h)=P^{\mathbb{Q}}(\tilde{\omega_{0}})\wedge\sigma^{*}(e)$, 
where $\tilde{\omega}_{0}$ is a $2$-form in $\R^{p+q}$ isomorphic to the Kahler form in $\R^{2m}$ that defines the $\textrm{U}(m)$-structure projectively recovered in $\R^{2m}$. 
\end{prop}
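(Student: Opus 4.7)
The plan is to reduce the general-signature case to the situation of Theorem \ref{Kahler} inside a Clifford subalgebra of $\R_{p,q}$ isomorphic to $\R_{m,m}$, and then extend the resulting idempotent to a primitive idempotent of $\R_{p,q}$ by multiplying by an idempotent built from the ``excess'' generators that lie outside this subalgebra.

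Assume without loss of generality $p \le q$ (the case $p > q$ is handled symmetrically, pairing the last $q$ positive generators with the $q$ negative ones). Set $m = p$ and single out the subalgebra $A := \R\langle e_1, \ldots, e_{2p}\rangle \subset \R_{p,q}$, which contains $p$ positive and $p$ negative generators and is therefore isomorphic to $\R_{p,p}$. Theorem \ref{Kahler}, applied inside $A$, furnishes
\[
\tilde{f} \,:=\, q^{*}(P^{\mathbb{Q}}(\tilde{\omega}_0)) \,=\, \prod_{i=1}^{p} \frac{1 + e_i e_{i+p}}{2},
\]
where $\tilde{\omega}_0 = e^1\wedge e^{p+1} + \cdots + e^{p}\wedge e^{2p}$ is the standard Kähler form on the $\R^{2p}$ spanned by the first $2p$ basis vectors. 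By Corollary \ref{Kahler Corrollary}, this $\tilde{f}$ projectively recovers a $\textrm{U}(p)$-structure on $\R^{2p}$, via $\sigma^{*}\big|_{A}(\langle 2^{p}\tilde{f}\rangle_2) = \tilde{\omega}_0$.

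Next, I extend $\tilde{f}$ to a primitive idempotent of $\R_{p,q}$. Theorem \ref{Minimal left ideals} demands $k := q - r_{q-p}$ commuting involutions; the $p$ elements $e_ie_{i+p}$ contribute $p$ of them, so $k - p = (q-p) - r_{q-p}$ more are required. These I draw from the complementary subalgebra $B := \R\langle e_{2p+1},\ldots,e_{p+q}\rangle \cong \R_{0,q-p}$: Theorem \ref{Minimal left ideals} applied to $B$ yields commuting involutions $g_1,\ldots,g_{k-p}\in B$ and a primitive idempotent $e := \prod_{j=1}^{k-p} \tfrac{1+g_j}{2}$ of $B$. Since the generators of $A$ and $B$ involve disjoint basis indices, each $e_ie_{i+p}$ commutes with each $g_j$ (anticommutations occur in pairs: both $e_i$ and $e_{i+p}$ anticommute past every $e_{s_\ell}$ appearing in $g_j$). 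Hence $h := \tilde{f}\cdot e$ is a product of $k$ commuting half-sum factors of the required type, and by Theorem \ref{Minimal left ideals} is a primitive idempotent of $\R_{p,q}$. By construction, $\tilde{f}$ is a factor of $h$ in the sense of Definition \ref{definition: factor}.

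Finally, I would note that when two Clifford elements live in subalgebras whose generating basis vectors are disjoint, the Clifford product coincides with the wedge product under $\sigma^{*}$, because the contractions responsible for the discrepancy all vanish. Applying this to $\tilde{f}\in A$ and $e\in B$ gives
\[
\sigma^{*}(h) \,=\, \sigma^{*}(\tilde{f})\wedge \sigma^{*}(e) \,=\, P^{\mathbb{Q}}(\tilde{\omega}_0)\wedge \sigma^{*}(e),
\]
with $\sigma^{*}(e)$ naturally sitting inside $\bigwedge^{\bullet}$ of the complementary $\R^{q-p} = \R^{\ell}\subset \R^{p+q}$, which is the claimed splitting. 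The main obstacle I anticipate is the Radon-Hurwitz bookkeeping: the count $k-p$ of additional involutions must match exactly the Radon-Hurwitz count for $\R_{0,q-p}$, a coincidence that follows from the definition of $r_{q-p}$ but needs careful verification in all congruence classes of $q-p \bmod 8$, and analogously for $r_{-(p-q)}$ in the symmetric case $p>q$ where the excess subalgebra is $\R_{p-q,0}$. The disjoint-support multiplicativity of $\sigma^{*}$ is routine but essential for the final product formula.
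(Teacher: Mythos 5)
Your proposal takes essentially the same route as the paper: isolate the Clifford subalgebra $A=\R\langle e_1,\ldots,e_{2m}\rangle\cong\R_{m,m}$, take $\tilde f = q^*(P^{\Q}(\tilde\omega_0))$ there, complete to a primitive idempotent $h=\tilde f e$ by adjoining commuting involutions from the complementary subalgebra $B$, and use disjointness of supports to conclude $\sigma^*(h)=P^{\Q}(\tilde\omega_0)\wedge\sigma^*(e)$. The Radon--Hurwitz ``bookkeeping'' you flag as the main obstacle is in fact automatic: with $k=q-r_{q-p}$, the residual count is $k-p=(q-p)-r_{q-p}$ when $p<q$ (and $k-q=-r_{q-p}$ when $p>q$), which is precisely the Radon--Hurwitz count $q'-r_{q'-p'}$ for the complementary algebra $\R_{0,q-p}$ (resp.\ $\R_{p-q,0}$), so the required number of extra involutions always exists in $B$ and no case-by-case check modulo $8$ is needed; this matching is in fact where the paper implicitly, and your write-up explicitly, obtains the extra involutions from the primitive idempotent construction inside $B$.
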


\begin{proof}
For Clifford algebras $\R_{p,q}$  such that $p \neq q$, $p, q \neq 0$, and $q \not\in \{p+1, p+2\}$, we break down our analysis into two cases.
 \begin{enumerate}
 \item If $p>q$, then we can write $p=q+l$ for some $l>0$. Thus, for the Clifford algebras $\R_{q+l,q}$,  we can index $q$ generators of degree $2$ of the canonical basis as   $e_{\sigma_{j}}=e_{j} e_{_{q+j}}$  for $j=1,\ldots,q$.  We then define $\tilde{f}=(1+e_{\sigma_1})\cdots(1+e_{\sigma_{q}})$, where $e_{\sigma_{j}}^2=1$ and each $e_{\sigma_{j}}$ commutes with all other  elements $e_{\sigma_{k}}$ where $j\neq k$.  The  expanded form $\tilde{f}$ is given by   
\[
\tilde{f}=1+\sum_{k=1}^{q} \, \sum_{1\leq j_{1}<\cdots <j_{k}\leq q}e_{\sigma_{j_{1}}}\cdots e_{\sigma_{j_{k}}}.
\]
 
In the Clifford algebra $\R_{q+l,q}$, we denote the subalgebra generated by the  basis vectors $e_{1},\ldots,e_{q},e_{q+1+l},\ldots,e_{2q+l}$ as  $A=\R\langle e_{1},\ldots,e_{q},e_{q+1+l},\ldots,e_{2q+l}\rangle$.  It is immediate that this Clifford algebra $A$ is isomorphic to the real Clifford algebra $\R_{q,q}$.
Restricting the symbol map to the subalgebra $A$, we have at the level of images that $\sigma^{*}\big|_{A}(A)\cong \bigwedge^{\bullet}(\R^{2q})^{*}$. Utilizing this isomorphism, we now have $\sigma^{*}\big|_{A}(\tilde{f})=P(\omega_{0})$, where $\sigma\big|_{A}^{*}(\langle \tilde{f}\rangle_2)=\sum_{j=1}^{q}\sigma^{*}\big|_{A}(e_{\sigma_{j}})=\sum_{j=1}^{q}e^{j}\wedge e^{q+j}:=\omega_{0}\in \wedge^{2}(\R^{2q})^{*}$. Hence $\sigma^{*}\big|_{A}(\tilde{f})$ gives us our Kahler form $\omega_{0}$ on $\R^{2q}$, and thus we have a $\textrm{U}(q)$-structure recovered by the projection of the element $\tilde{f}\in A$. For the minimal left ideal associated with the Clifford algebra $\R_{q+l,q}$, the primitive idempotent that defines it, $h$, is the product of $k$ commuting involutions (see Theorem  \ref{Minimal left ideals}). Considering that in the construction of $\tilde{f}$ we already have $q$ commuting positive definite generators, we may  partition $k$ as $k=q+m$ for some $m\geq 0$. Thus, to generate a primitive idempotent $h$ in $\R_{q+l,q}$, we need an additional $m$ more commuting positive definite generators chosen from the subalgebra $\R\langle e_{q+1},\ldots,e_{q+l}\rangle \cong \R_{l,0}$; we denote these generators by  $e_{\alpha_1},\dots,e_{\alpha_{m}}$. Hence our primitive idempotent will be of the form $h=(\dfrac{1}{2^{q}}\tilde{f})(e)=\dfrac{1}{2^{q}}(\tilde{f} e)$, where $e=\dfrac{1}{2^{m}}(1+e_{\alpha_{1}})\cdots (1+e_{\alpha_{m}}).$ By construction, this product is a sum of elementary tensors. Therefore, the symbol map preserves the Clifford product that makes up $h$. That is, $\sigma^{*}(\dfrac{1}{2^{q}}\tilde{f}   e)=\sigma^{*}(\dfrac{1}{2^{q}}\tilde{f})\wedge \sigma^{*}( e)=P^{\mathbb{Q}}(\tilde{\omega_{0}})\wedge \sigma^{*}(e)$, where $\tilde{\omega}_{0}$ is the image of the  $2$-form in $\R^{2q+l}$ generated by $\tilde{f}\in A$ isomorphic to the Kahler $2$-form $\omega_{0}$ in $\R^{2q}$ that defines our $\textrm{U}(q)$-structure recovered by projection, and $\sigma^{*}(e)$ is an exterior form in the subalgebra $\bigwedge^{\bullet}(\R^{l})^{*}\subset \bigwedge^{\bullet}(\R^{2q+l})^{*}$, where we view $\R^{l}=\R\{e_{q+1},\ldots,e_{q+l}\}$, and we view $\R^{2q+l}=\R^{2q}\oplus\R^{l}$.

\item If $p<q$, then we can write $q=p+l$ for some $l>0$. Then on the Clifford algebras $\R_{p,p+l}$,  we can index as we did above: $e_{\sigma_{j}}=e_{j} e_{_{p+j}}$  for $j=1,\ldots,p$, where $e_{\sigma_{j}}^2=1$ for $j=1, \ldots, p$, and the  $e_{\sigma_{j}}$ are commuting elements in $\R_{p,p+l}$. When we expand the product, we obtain
\[
\tilde{f}=1+\sum_{k=1}^{p}\sum_{1\leq j_{1}<j_{2}<\cdots <j_{k} \leq p}e_{\sigma_{j_{1}}}\cdots e_{\sigma_{j_{k}}}.
\]
In the Clifford algebra $\R_{p,p+l}$, we have that the subalgebra $A=\R\langle e_{1},\ldots,e_{2p} \rangle$ is isomorphic to the Clifford algebra $\R_{p,p}$.
Again, when we restrict the symbol map to the subalgebra $A$, we have at the level of images that $\sigma^{*}\big|_{A}(A)\cong \bigwedge^{\bullet}(\R^{2p})^{*}$. Using this isomorphism,  we now have $\sigma^{*}\big|_{A}(\tilde{f})=P(\omega_{0})$, where $\sigma\big|_{A}^{*}(\langle \tilde{f}\rangle_2)=\sum_{j=1}^{p}\sigma^{*}\big|_{A}(e_{\sigma_{j}})=\sum_{j=1}^{p}e^{j}\wedge e^{p+j}:=\omega_{0}\in \wedge^{2}(\R^{2p})^{*}$. Hence we have a $\textrm{U}(p)$-structure $\R^{2p}$ recovered by the projection of our constructed element $\tilde{f}\in A$. 
Since in the construction of $\tilde{f}$ we already have $p$ commuting positive definite generators, we may  partition $k$ as $k=p+m$ for some $m\geq 0$ (where $m=0$ when we have signature $(p,p+1)$ or $(p,p+2)$).
Since we have $p$ commuting involutions used for the construction of the element $\tilde{f}$, where $\sigma^{*}(\tilde{f})=P(\omega_{0})$, the primitive idempotent that generates the minimal left ideal can be generated using these $p$ commuting involutions and an additional $m$ commuting involutions chosen from the subalgebra $\R\langle e_{2p+1},\ldots,e_{2p+l}\rangle\cong\R_{0,l}$, which again we denote $e_{\alpha_1},\ldots,e_{\alpha_{m}}$.
Our primitive idempotent will be of the form $h=(\dfrac{1}{2^{p}}\tilde{f})(e)=\dfrac{1}{2^{p}}(\tilde{f} e)$, where $e=\dfrac{1}{2^{m}}(1+e_{\alpha_{1}})\cdots (1+e_{\alpha_{m}}).$ 
  Given that this product is the sum of monomials of elementary tensors (basis elements in the Clifford algebra $\R_{p,p+l}$) that do not overlap in indices, the symbol map preserves the product. Thus, we have  $\sigma^{*}(\dfrac{1}{2^{p}}\tilde{f}   e)=\sigma^{*}(\dfrac{1}{2^{p}}\tilde{f})\wedge \sigma^{*}(  e)=P^{\mathbb{Q}}(\tilde{\omega_{0}})\wedge \sigma^{*}(e)$, where $\tilde{\omega}_{0}$ is the image of the  $2$-form in $\R^{2p+l}$ generated by $\tilde{f}\in A$ isomorphic to the Kahler $2$-form $\omega_{0}$ in $\R^{2p}$ that defines our $\textrm{U}(q)$-structure recovered by projection, and $\sigma^{*}(e)$ is an exterior form in the subalgebra $\bigwedge^{\bullet}(\R^{l})^{*}\subset \bigwedge^{\bullet}(\R^{2p+l})^{*}$, where we view $\R^{l}=\R\{e_{2p+1},\ldots,e_{2p+l}\}$, and we view $\R^{2p+l}=\R^{2p}\oplus\R^{l}$.
   \end{enumerate}
\vspace{2mm}

Hence we have shown that for any signature $(p,q)$ where $p \neq q$, $p, q \neq 0$, and $q \not\in \{p+1, p+2\}$, we can find a factor of a minimal left ideal that recovers a $\textrm{U}(m)$-structure through projection. Moreover, the image of the primitive idempotent $h$ in the exterior algebra $\bigwedge^{\bullet}(\R^{p+q})^{*}$ is the product of the rational Kahler polynomial $P^{\mathbb{Q}}(\tilde{\omega}_{0})$, where $\tilde{\omega}_{0}$ is a $2$-form in $\R^{p+q}$ isomorphic to the Kahler form in $\R^{2m}$, and an exterior form $\sigma^{*}(e)$ in the exterior subalgebra $\bigwedge^{\bullet}(\R^{l})^{*}$.


\end{proof}



\section{Future research}

Having established a relationship between $\textrm{U}(n)$-structures and their associated minimal left ideals, we next plan to focus on the following questions: 

\begin{enumerate}
    \item How can we extend this identification between $\U(n)$-structures and minimal left ideals to spinor bundles on Kahler manifolds?
    \item Can we establish an identification between the quaternionic holonomy groups and minimal left ideals on Clifford algebras?
    \item Is there a way we can associate minimal left ideals with the holonomy groups of pseudo-Riemannian manifolds?
\end{enumerate}

\section{Acknowledgments }
We would like to thank Anna Fino for our discussion on future research directions for this work, and Jennifer Brown for discussions about the presentation of these ideas.

\end{document}